\def\R{\mathbb{R}}
\def\N{\mathbb{N}}
\def\Z{\mathbb{Z}}
\def\C{\mathbb{C}}
\def\H{\mathbb{H}}
\def\Q{\mathcal{Q}}
\def\F{\mathcal{F}}
\def\V{\mathcal{V}}
\def\X{\mathcal{X}}
\def\supp{{\rm supp}}
\def\H{\mathcal{H}}
\renewcommand{\d}{\text{\rm d}}
\newcommand{\eps}{\varepsilon}
\newcommand{\mc}{\mathcal}
\newtheorem{theorem}{Theorem}
\newtheorem*{theoremA}{Theorem A}
\newtheorem*{theoremB}{Theorem B}
\newtheorem{corollary}[theorem]{Corollary}
\newtheorem*{definition*}{Definition}
\newtheorem{proposition}[theorem]{Proposition}
\newtheorem{lemma}[theorem]{Lemma}
\DeclareFontFamily{U}{tipa}{}
\DeclareFontShape{U}{tipa}{m}{n}{<->tipa10}{}
\newcommand{\arc@char}{{\usefont{U}{tipa}{m}{n}\symbol{62}}}%
\numberwithin{equation}{section}
\newcommand{\intav}[1]{\mathchoice {\mathop{\vrule width 6pt height 3 pt depth  -2.5pt
\kern -8pt \intop}\nolimits_{\kern -6pt#1}} {\mathop{\vrule width
5pt height 3  pt depth -2.6pt \kern -6pt \intop}\nolimits_{#1}}
{\mathop{\vrule width 5pt height 3 pt depth -2.6pt \kern -6pt
\intop}\nolimits_{#1}} {\mathop{\vrule width 5pt height 3 pt depth
-2.6pt \kern -6pt \intop}\nolimits_{#1}}}
\newcommand{\intavl}[1]{\mathchoice {\mathop{\vrule width 6pt height 3 pt depth  -2.5pt
\kern -8pt \intop}\limits_{\kern -6pt#1}} {\mathop{\vrule width 5pt
height 3  pt depth -2.6pt \kern -6pt \intop}\nolimits_{#1}}
{\mathop{\vrule width 5pt height 3 pt depth -2.6pt \kern -6pt
\intop}\nolimits_{#1}} {\mathop{\vrule width 5pt height 3 pt depth
-2.6pt \kern -6pt \intop}\nolimits_{#1}}}
\title[Zeros of $L$-Functions in Low-Lying Intervals and de Branges spaces]{Zeros of $L$-Functions in Low-Lying Intervals \\ and de Branges spaces} 
\author[Ramos]{Antonio Pedro Ramos}
\subjclass[2020]{42A05, 46E22, 11F66, 11M26, 11M41}
\keywords{Families of $L$-functions; low-lying zeros; reproducing kernels; Hilbert spaces; de Branges spaces}
\address{SISSA - Scuola Internazionale Superiore di Studi Avanzati, Via Bonomea 265, 34136 Trieste, Italy}
\email{antonio.ramos@sissa.it}
\begin{document}

\begin{abstract} 
We consider a variant of a problem first introduced by Hughes and Rudnick (2003) and generalized by Bernard (2015) concerning conditional bounds for small first zeros in a family of $L$-functions. Here we seek to estimate the size of the smallest intervals centered at a low-lying height for which we can guarantee the existence of a zero in a family of $L$-functions. This leads us to consider an extremal problem in analysis which we address by applying the framework of de Branges spaces, introduced in this context by Carneiro, Chirre, and Milinovich (2022).

\end{abstract}

\maketitle 

\section{Introduction} 
This is a paper at the intersection of Analysis and Number Theory. We consider a question coming from the theory of $L$-functions, which we tackle by addressing a related extremal problem in analysis. These are the contents of Theorems \ref{MainThm} and \ref{EPSolution}. Theorem \ref{EPSolution} is in fact a specialization of the more general Theorem \ref{EPThm}, the main result of this paper, which solves a family of extremal problems in Hilbert spaces of entire functions.

\subsection{Zeros of \emph{L}-functions} As a consequence of a one-level density result, Hughes and Rudnick \cite{HR} conditionally proved the existence of small first zeros within the family of Dirichlet $L$-functions for large values of the conductor. First, they proved that for even Schwartz functions $\phi$ with $\supp \, \widehat{\phi} \subseteq [-2, 2]$, we have the \emph{one-level density}\footnote{In \cite{HR}, they use the term \emph{linear statistic} to mean the same thing. We adopt the terminology in \cite{ILS,KS1,KS2}.}
\begin{equation}\label{OneLevel}
    \lim_{q\rightarrow \infty}\frac{1}{q-2}\sum_{\chi \neq \chi_0} \sum_{\gamma_\chi} \phi\left( \frac{\gamma_\chi \log q}{2 \pi}\right) = \int_\R \phi(x) \, \d x,
\end{equation}
where the outside is a sum over non-principal Dirichlet characters modulo a prime $q$ and the inside sum is over their respective non-trivial zeros $\frac{1}{2} + i\gamma_\chi$. They then observed that, under the generalized Riemann Hypothesis (GRH), if the one-level density (\ref{OneLevel}) holds for even Schwartz functions $\phi$ with $\supp \, \widehat{\phi} \subseteq [-\Delta, \Delta]$, one can always obtain a bound for the height of the first zero in the family in terms of $\Delta > 0$. Explicitly, 
\begin{equation}\label{HRBound}
    \limsup_{q\rightarrow \infty} \, \min_{\substack{{\gamma_\chi}\\{\chi \neq \chi_0}}} \left|  \frac{\gamma_\chi \log q}{2 \pi} \right| \leq \frac{1}{2\Delta},
\end{equation}
which together with the previous observation means that, for large $q$, there exist zeros in this family within $\frac{1}{4}$ times the average spacing.

\smallskip

Bernard \cite{B}, and later Carneiro, Chirre, and Milinovich  \cite{CChiM}, extended the analysis of Hughes and Rudnick to more general families of $L$-functions. The context is the following: we consider families of automorphic objects $\F$.  Each $f \in \F$ has an associated $L$-function
\begin{equation*}
    L(s,f) = \sum_{n = 1}^\infty \lambda_f(n) n^{-s}
\end{equation*}
that can be continued analytically to an entire function, and its corresponding completed $L$-function $\Lambda(s,f) = L_\infty(s,f)L(s,f)$ satisfies a functional equation of the type
\begin{equation*}
    \Lambda(s,f) = \varepsilon_f \Lambda(1-s,\bar f),
\end{equation*}
where $|\varepsilon_f| =  1$ and $L(s,\bar f)$ is the dual $L$-function with Dirichlet series coefficients $\lambda_{\bar f}(n)= \overline{\lambda_f(n)}$. 
We denote the analytic conductor of $L(s,f)$ by $c_f$ and write its non-trivial zeros as $\rho_f = \frac{1}{2} + i \gamma_f$. Throughout, we assume GRH for these $L$-functions, meaning $\gamma_f \in \R$. We further assume that our family satisfies the assumptions
\begin{equation}\label{SelfDual}
    \begin{cases}
        f \in \mc F \implies \bar{f} \in \mc F; \\
        L_{\infty}(s,f) \neq 0 \text{ for all } \mathrm{Re}\, (s) = \frac{1}{2},
    \end{cases}
\end{equation}
which in in particular guarantee that if $\frac{1}{2} + i \gamma$ is a non-trivial zero within this family of $L$-functions, then so is its conjugate $\frac{1}{2} - i \gamma$.

As conjectured by Katz and Sarnak \cite{KS1, KS2}, for each natural family $\{ L(s,f) : f \in \mathcal F \}$ of $L$-functions there is an associated symmetry group $G = G(\mathcal F)$ which governs the distribution of its low-lying zeros. The group $G$ can be either unitary U, symplectic Sp, orthogonal O, even orthogonal SO(even), or odd orthogonal SO(odd). The idea is to consider averages over finite subsets of $\F$ ordered by the conductor
\begin{equation*} 
    \big\{f \in \ F: c_f = \Q \big\}
\end{equation*}
or, depending on the context,
\begin{equation*} 
    \big\{ f \in \ F: c_f \leq \Q \big\}
\end{equation*}
as $\Q \to \infty$. In both cases we denote the set under consideration by $\F(\Q)$ in order to unify the presentation. Now, for each of these symmetry groups, there is a corresponding density $W_G$ for which Katz and Sarnak conjectured the one level-density 
\begin{equation}\label{KSconj}
    \lim_{\Q\rightarrow \infty}\frac{1}{|\mathcal F(\Q)|}\sum_{f \in \mathcal F(\Q)} \sum_{\gamma_f} \phi\left( \frac{\gamma_f \log c_f}{2 \pi}\right) = \int_\R \phi(x) \, W_{G} (x) \, \d x
\end{equation}
for even Schwartz functions $\phi$ with compactly supported Fourier transform. Naturally, the zeros are counted with their multiplicities in the sums above.
These five densities are
\begin{align}
   \nonumber W_{\text{U}}(x) &= 1;\\
     \nonumber W_{\text{Sp}}(x) &= 1 - \frac{\sin 2 \pi x}{2 \pi x};\\
    \label{densities}W_{\text{O}}(x) &= 1 +\frac{1}{2} {\pmb \delta}_0(x);\\
    \nonumber W_{\text{SO(even)}}(x) &= 1 + \frac{\sin 2 \pi x}{2 \pi x};\\
    \nonumber W_{\text{SO(odd)}} &= 1 - \frac{\sin 2 \pi x}{2 \pi x}(x) + {\pmb  \delta}_0(x),
\end{align}
where $\pmb \delta_0$ is the Dirac delta at $x = 0$. Some examples of density results which  fall within this framework can be found in \cite{AlM, AnBal, BaiZ, BuFl, ChoKi, ConSna, DPRad, DueM1, DueM2, FI, GaZ, Gu, H-B, HM, HR, ILS, KS1, MPe, OSny, RiRo, Ro, Ru, ShSoT, Y}, with possible differences in setup and notation.
One-level densities for families of $L$-functions have been used to estimate the proportion of non-vanishing at the central point $s = \frac{1}{2}$ \cite{AnBal, BuFl, ConSna, DPRad, Fr, FrM, GaZ,HM, ILS, KS1, OSny}, the average rank of elliptic curves \cite{BaiZ, H-B,Y}, and the height of small first zeros \cite{B,CChiM, HR}. It is worth noting that the authors of \cite{CChiM} considered a generalization of the problem of the proportion of non-vanishing, moving away from the central point $s = \frac{1}{2}$ to low-lying heights of the critical line, where height is measured in terms of the analytic conductor.

Regarding small first zeros, the strategy common to \cite{B, CChiM, HR} is that one-level density results lead to certain extremal problems in analysis, which  in turn provide estimates of the height of the first zero in a family of $L$-functions. Bernard \cite{B} tackled these problems via a careful analysis of an associated  Volterra differential equation, while Carneiro, Chirre, and Milinovich \cite{CChiM} reframed them as a corollary of a more general result within the theory of de Branges spaces of entire functions.  In this paper we take up the latter approach to provide estimates for first zeros in intervals at low-lying heights of the critical line. If we consider the normalized zeros $\tilde \rho_f = \frac{1}{2} + i \tilde \gamma_f$ with $ \tilde \gamma_f = \gamma_f \frac{\log c_f}{2\pi}$, we want to know, given an $\alpha \in \R$, the size of the smallest interval of the critical line centered at $\frac{1}{2} + i \alpha$  for which we can guarantee the existence of a zero $\tilde \rho_f$ in a given family of $L$-functions.

\subsection{Hilbert spaces of entire functions}
The upper-bound (\ref{HRBound}) can be cast as the solution of an extremal problem in Paley--Wiener space, a classical example of a Hilbert space of entire functions. This is generalized in the case of the other symmetry groups by the de Branges spaces. 

We recall that an entire function $F: \C \to \C$ is said to be of exponential type if 
\begin{equation*}
    \tau (F) := \limsup_{|z|\to \infty}\, |z|^{-1}\log |F(z)| < \infty,
\end{equation*}
and, in that case, we say $\tau (F)$ is the exponential type of $F$. The spaces $\H_{G, \pi \Delta}$ of entire functions of exponential type at most $\pi \Delta$ with norm
\begin{equation*}
    \| F \|^2_{\H_{G, \pi \Delta}} := \int_\R |F(x)|^2 \,  W_G(x) \, \d x < \infty
\end{equation*}
constitute a Hilbert space.  We point out that $\H_{\text{U},\pi \Delta}$ equipped with the norm $\| \cdot \|_{\H_{\text{U}, \pi \Delta}}$ is the classical Paley--Wiener space. As demonstrated in \cite[Section 3]{CChiM}, an application of a Fourier uncertainty principle shows that, as sets, the $\H_{G, \pi \Delta}$ are all equal. This is the set $\H_{\pi \Delta}$ of functions of exponential type at most $\pi \Delta$ such that their restriction to $\R$ is square-integrable. The content of the Paley--Wiener theorem is that $F \in \H_{\pi \Delta}$ if and only if $F \in C(\R)\cap L^2(\R)$ and $\mathrm{supp}\,\widehat F \subseteq [-\Delta/2, \Delta/2]$.

If one defines 
\begin{equation}\label{EPCChiM}
    \mathbb A( {G, \pi \Delta, 0}) := \inf_{0 \neq f \in \mathcal H_{\pi \Delta}} \frac{\int_ \R |x f(x)|^2 \, W_G(x)\, \d x}{\int_\R|f(x)|^2 \, W_G(x) \, \d x},
\end{equation}
then the authors of \cite{CChiM} showed how to obtain Theorem A below. The presence of the parameter $0$ will become clear soon.

\begin{theoremA}[cf. {\cite[Theorem 9]{CChiM}}]
    Let $\{L(s,f), \, f \in \F\}$ be a family of L-functions with an associated symmetry group $G \in\{{\rm U, Sp, O, SO(even), SO(odd) }\}$, and assume that GRH holds for $L$-functions in this family. In addition, suppose (\ref{KSconj}) holds for even Schwartz functions $\phi$ with $\supp  \,  \widehat \phi \subseteq (-\Delta, \Delta)$ for a fixed $\Delta > 0$. Then for the families with $G \in\{\rm U, Sp,  SO(even) \}$
    \begin{equation*}
        \limsup_{\Q \rightarrow \infty} \, \min_{\substack{{\gamma_f}\\{f \in \F(Q)}}} \left|  \frac{\gamma_f \log c_f}{2 \pi} \right| \leq \sqrt{\mathbb A( {G, \pi \Delta, 0})}.
    \end{equation*}
    Moreover, for the families with $G \in\{\rm O, SO(odd) \}$, we may exclude the zeros at the central point $Z(f)$. Thus, denoting $G^\sharp = {\rm U}$ if $G = {\rm O}$ and $G^\sharp = {\rm Sp}$ if $G = {\rm SO(odd)}$, we have
    \begin{equation*}
        \limsup_{\Q \rightarrow \infty} \, \min_{\substack{{\gamma_f \notin Z(f) }\\{f \in \F(Q)}}} \left|  \frac{\gamma_f \log c_f}{2 \pi} \right| \leq \sqrt{\mathbb A( {G^\sharp, \pi \Delta, 0})}.
    \end{equation*}  
\end{theoremA}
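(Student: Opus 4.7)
The plan is a proof by contradiction driven by a carefully chosen test function in \eqref{KSconj}. For $G \in \{{\rm U, Sp, SO(even)}\}$ set $G' := G$ and for $G \in \{{\rm O, SO(odd)}\}$ set $G' := G^\sharp$; let $r_\star := \sqrt{\mathbb A(G', \pi\Delta, 0)}$. If the asserted bound fails then there is some $r > r_\star$ together with conductors $\mathcal Q_n \to \infty$ such that every non-trivial zero (respectively, every non-trivial non-central zero) $\tilde \gamma_f$ with $f \in \mathcal F(\mathcal Q_n)$ satisfies $|\tilde \gamma_f| \geq r$. Using the defining infimum in \eqref{EPCChiM} together with a density argument and the evenness of $W_{G'}$, pick an even Schwartz $F$ whose Fourier transform is smooth and supported in $(-\Delta/2 + \eps, \Delta/2 - \eps)$ for some small $\eps > 0$, and with
$$
\int_\R |xF(x)|^2 W_{G'}(x)\,\d x \;<\; r^2 \int_\R |F(x)|^2 W_{G'}(x)\,\d x.
$$

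Set $\phi(x) := (r^2 - x^2)|F(x)|^2$. Then $\phi$ is an even Schwartz function, and since $\widehat{|F|^2} = \widehat F \ast \overline{\widehat F(-\cdot)}$ is supported in $(-\Delta+2\eps, \Delta-2\eps)$, so is $\widehat \phi$, which makes $\phi$ admissible in \eqref{KSconj}. For $G \in \{{\rm U, Sp, SO(even)}\}$ the density $W_G$ carries no point mass, and the hypothesis $|\tilde\gamma_f| \geq r$ forces $\phi(\tilde\gamma_f) \leq 0$; passing to the limit along the subsequence in \eqref{KSconj} yields
$$
0 \;\geq\; \int_\R \phi(x)\, W_G(x)\, \d x \;=\; r^2 \int_\R |F|^2 W_G\,\d x \;-\; \int_\R |xF|^2 W_G\,\d x,
$$
contradicting the choice of $F$. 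For $G \in \{{\rm O, SO(odd)}\}$ use the decompositions $W_{\rm O} = W_{\rm U} + \tfrac12 \delta_0$ and $W_{{\rm SO(odd)}} = W_{\rm Sp} + \delta_0$: by the Katz--Sarnak bookkeeping the $\delta_0$-term on the right-hand side of \eqref{KSconj} is precisely the limiting contribution of the central zeros to the left-hand side. Subtracting these matching terms leaves
$$
\lim_n \tfrac{1}{|\mathcal F(\mathcal Q_n)|}\!\!\sum_{f\in\mathcal F(\mathcal Q_n)}\sum_{\tilde\gamma_f \notin Z(f)}\!(r^2-\tilde\gamma_f^2)\,|F(\tilde\gamma_f)|^2 \;=\; r^2\!\int_\R\! |F|^2 W_{G^\sharp}\,\d x \;-\; \int_\R\! |xF|^2 W_{G^\sharp}\,\d x,
$$
and the bound on non-central zeros makes the left-hand side nonpositive, again contradicting the choice of $F$.

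The main obstacle is the justification of admissibility of $\phi$: one must show that the infimum \eqref{EPCChiM} for $G'$ is unchanged when restricted to $F$ that are simultaneously even, Schwartz, and with Fourier transform strictly inside $(-\Delta/2, \Delta/2)$. This can be handled by a two-step regularization of a near-extremizer (mollify $\widehat F$ and shrink its support by an arbitrarily small amount), combined with continuity of the Rayleigh quotient in $F$ relative to the $W_{G'}$-weighted $L^2$ norm. Once this is in place, the identities $W_{\rm O} - \tfrac12\delta_0 = W_{\rm U}$ and $W_{{\rm SO(odd)}} - \delta_0 = W_{\rm Sp}$ explain transparently why the exceptional zeros in $Z(f)$ may be excluded exactly at the cost of replacing $G$ by $G^\sharp$ in the final bound.
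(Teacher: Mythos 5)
Your proposal follows the same Hughes--Rudnick strategy the paper sketches in Section 2 for the proof of Theorem~1 (with $\alpha=0$), just presented in contrapositive form: build an even test function $\phi = (r^2 - x^2)|F|^2$ with admissible Fourier support, feed it into \eqref{KSconj}, and obtain a sign contradiction. The paper itself makes the observation you exploit, namely that only the one-sided sign condition on $\phi$ outside the target interval is actually needed. The regularization issue you flag (replacing a near-extremizer in $\H_{\pi\Delta}$ by an even Schwartz $F$ with $\widehat F$ compactly supported strictly inside $(-\Delta/2,\Delta/2)$) is genuine and is handled in the literature exactly by the density argument you describe, so that step is fine.

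One imprecision worth flagging in the $G\in\{\mathrm{O,\;SO(odd)}\}$ step: the displayed identity obtained by ``subtracting matching terms'' asserts that the $\delta_0$-mass in $W_G$ is \emph{exactly} the limiting average contribution of the central zeros. That equality is not literally a consequence of \eqref{KSconj}; \eqref{KSconj} only controls the full zero sum. What you actually use, and all that is needed, is the one-sided bound
\begin{equation*}
  \frac{1}{|\mathcal F(\Q)|}\sum_{f\in\mathcal F(\Q)} m_f \;\geq\; c,\qquad c=\tfrac12 \text{ for } \mathrm{O},\ \ c=1 \text{ for } \mathrm{SO(odd)},
\end{equation*}
where $m_f$ is the multiplicity of the central zero of $L(s,f)$. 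Since $\phi(0)=r^2|F(0)|^2\geq 0$, this inequality lets you replace your equality by a $\le$ inequality and the contradiction goes through unchanged. For $\mathrm{SO(odd)}$ the bound $m_f\geq 1$ for every $f$ is forced by $\varepsilon_f=-1$, and for $\mathrm{O}$ it reflects the defining feature of these families that asymptotically at least half of the forms have $\varepsilon_f=-1$. This is precisely the role of ``the sign of the functional equation'' invoked in the remark after Theorem~1; it is a structural fact about the family, not a deduction purely from \eqref{KSconj}, so it is cleaner to state it as such than to attribute it to ``Katz--Sarnak bookkeeping.''
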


In all of these spaces the evaluation functionals are continuous. In other words, for all $w \in \C$ the linear map $F \mapsto F(w) $ is continuous. By applying the Riesz representation theorem, we know that for each $w \in \C$ there is an associated entire function $K_{G, \pi \Delta}(w, \cdot) \in \H_{G, \pi \Delta}$ that corresponds to the evaluation functional at $w$, meaning
\begin{equation*}
    F(w) = \langle  F,  K_{G, \pi \Delta(w, \cdot)}\rangle_{\H_{G, \pi \Delta}} = \int_\R  F(x) \overline{K_{G, \pi \Delta}(w, x)} \, \d x
\end{equation*}
for all $F \in \H_{G, \pi \Delta}$. Such a $ K_{G, \pi \Delta}(w, \cdot)$ is called a reproducing kernel.  We thus say these spaces have the \emph{reproducing kernel property}. Carneiro, Chirre, and Milinovich studied the spaces $\H_{G, \pi \Delta}$  and found explicit expressions for their reproducing kernels when $\Delta > 0$ for the groups $G \in \{ \rm{U, O} \}$ and when $0 < \Delta \leq 2$ for $G  \in \{ \rm{Sp, SO(even), SO(odd)}\}$ (see \cite[Theorems 3--7]{CChiM}). They also established that the solution to the extremal problem posed by (\ref{EPCChiM}) was given by

\begin{theoremB}[cf. {\cite[Theorem 9 and Theorem 14]{CChiM}}]
    Let $G \in \{ \rm U, Sp, O, SO(even), SO(odd) \}$, and let $K = K_{G, \pi\Delta}$ be the reproducing kernel of the Hilbert space $(\H _{\pi\Delta}$, $\langle\cdot, \cdot\rangle_{G, \pi\Delta})$.
    Let $\xi_1$ be the first positive zero of the function $x \mapsto \mathrm{Re}\, ((1-ix)K(i,x))$. 
    Then
    \begin{equation*}
       \mathbb A( {G, \pi \Delta, 0})= \inf_{0 \neq f \in \mathcal H_{\pi \Delta}} \frac{\int_ \R |x f(x)|^2\, W_G(x)\, \d x}{\int_\R|f(x)|^2\, W_G(x) \, \d x} = \xi_1^2.
    \end{equation*}
\end{theoremB}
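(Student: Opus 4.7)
The plan is to prove $\mathbb A(G, \pi\Delta, 0) = \xi_1^2$ by establishing matching upper and lower bounds, leveraging the reproducing-kernel and de Branges structure of $\H_{\pi\Delta}$.

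For the upper bound $\mathbb A \leq \xi_1^2$, I would exhibit an explicit extremizer: the symmetric combination of reproducing kernels
$$F_0(z) := K(\xi_1, z) + K(-\xi_1, z).$$
Because every $W_G$ in \eqref{densities} is even, the space $\H_{\pi\Delta}$ is invariant under $z \mapsto -z$, so $K(-\xi_1, z) = K(\xi_1, -z)$ and $F_0$ is an even real-valued function on $\R$. Using the explicit formulas for the reproducing kernels from \cite[Theorems 3--7]{CChiM}, one verifies that $xF_0 \in \H_{\pi\Delta}$. The decisive identity is
$$\|xF_0\|^2 - \xi_1^2 \,\|F_0\|^2 \;=\; \int_\R (x^2 - \xi_1^2)\,|F_0(x)|^2\, W_G(x)\, \d x \;=\; 0,$$
which reflects the defining relation $L(\xi_1) = 0$, where $L(x) := \mathrm{Re}((1-ix)K(i,x))$: indeed, $F_0$ is (up to a constant) the even entire function whose $(z^2 - \xi_1^2)$-multiple equals $L(z)$, so the integrand collapses via the reproducing property paired with $L(\pm\xi_1)=0$. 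As a sanity check, in Paley--Wiener one computes $F_0(z) \propto \cos(\pi\Delta z)/(z^2 - 1/(4\Delta^2))$ with $\xi_1 = 1/(2\Delta)$, and the vanishing reduces to $\mathrm{PV}\int \cos(\pi u)/(u^2 - 1)\, \d u = 0$, a standard contour computation.

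For the lower bound $\mathbb A \geq \xi_1^2$, I would invoke the spectral theory of the symmetric multiplication operator $M: f \mapsto xf$ on $\mathrm{Dom}(M) = \{f \in \H_{\pi\Delta} : xf \in \H_{\pi\Delta}\}$. In the de Branges framework $M$ admits a one-parameter family of self-adjoint extensions $\{M_\theta\}_{\theta \in [0,\pi)}$ whose spectra $\sigma(M_\theta) \subset \R$ are the zeros of $\cos\theta\,A + \sin\theta\,B$, where $E = A - iB$ is the de Branges structure function. Since $\mathrm{Dom}(M) \subseteq \mathrm{Dom}(M_\theta)$ and $M_\theta$ agrees with $M$ there, the spectral theorem yields
$$\mathbb A \;=\; \inf_{0 \neq f \in \mathrm{Dom}(M)} \frac{\|Mf\|^2}{\|f\|^2} \;\geq\; \inf_{0 \neq f \in \mathrm{Dom}(M_\theta)} \frac{\|M_\theta f\|^2}{\|f\|^2} \;=\; \mathrm{dist}(0, \sigma(M_\theta))^2$$
for every $\theta$, hence $\mathbb A \geq \sup_\theta \mathrm{dist}(0, \sigma(M_\theta))^2$. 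A matching identification shows $(1-iz)K(i,z)$ is (up to a constant) the de Branges $E$-function, so the zeros of $L$ are $\sigma(M_0)$, and interlacing across $\theta$ confirms $\sup_\theta \mathrm{dist}(0, \sigma(M_\theta)) = \xi_1$.

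The main obstacle, in my view, is the clean identification of $(1-iz)K(i,z)$ with a multiple of the de Branges $E$-function of $(\H_{\pi\Delta}, \langle\cdot,\cdot\rangle_{G,\pi\Delta})$. Case-by-case this can be carried out using the kernel formulas in \cite{CChiM}, but the cases $G \in \{\mathrm O, \mathrm{SO(odd)}\}$ are delicate: $W_G$ carries a Dirac mass at the origin, placing the inner product slightly outside the standard de Branges framework, and one must either excise that atom (consistent with the $G^\sharp$ convention in Theorem A) or extend the theory to accommodate it. This is presumably where the unified general setting of Theorem \ref{EPThm} enters and provides a cleaner proof.
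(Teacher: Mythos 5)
Your route is genuinely different from the paper's and, for this special case ($k=1$, $\alpha=0$), it is essentially viable; but both halves quietly rely on facts that are exactly the content of the paper's hypotheses (C1)--(C3) and of the reduction in Lemma \ref{SeqEquiv}, and as written they have gaps. The paper does not prove Theorem B directly: it deduces it (see the remark after Theorem \ref{EPSolution}) from the general Theorem \ref{EPThm}, whose proof converts the problem to a sequence space via the interpolation and Parseval formulas (\ref{Int})--(\ref{Pars}) at the zeros of $A$ and then runs a Lagrange-multiplier/determinant computation. Your upper bound is correct in substance: under the de Branges identification of \cite[Section 5.2]{CChiM}, $(1-iz)K(i,z)$ is a positive multiple of $E(z)$, so $L=\mathrm{Re}\,E|_\R$ is a multiple of $A$, and your $F_0=K(\xi_1,\cdot)+K(-\xi_1,\cdot)$ equals, by (\ref{RepKernel}) and $A(\pm\xi_1)=0$, a constant times $A(z)/(z^2-\xi_1^2)$ --- exactly the extremizer the paper identifies in the remark after Theorem \ref{EPThm}. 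But ``the integrand collapses via the reproducing property'' is not a proof of $\int_\R(x^2-\xi_1^2)|F_0(x)|^2W_G(x)\,\d x=0$: that integral pairs $F_0$ against a multiple of $A$, which by (C3) is \emph{not} in the space, so the reproducing identity cannot be invoked. The clean mechanism is Parseval at the zeros of $A$, i.e.\ (\ref{Pars}): $F_0$ vanishes at every zero of $A$ except $\pm\xi_1$, where $x^2=\xi_1^2$, whence $\|xF_0\|^2=\xi_1^2\|F_0\|^2$. This is precisely the paper's machinery, and it requires $A\notin\H(E)$.

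The same hypothesis is hidden in your lower bound. The self-adjoint extension $M_0$ with $\sigma(M_0)=A^{-1}(0)$ and eigenvectors $K(\xi,\cdot)$ exists exactly when the domain of $M$ is dense and $A\notin\H(E)$; once it does, that single extension already gives $\mathbb A\ge\mathrm{dist}(0,\sigma(M_0))^2=\xi_1^2$, so the supremum over $\theta$ and the interlacing are superfluous (and the extension attached to $B$ is useless here, since $B(0)=0$ puts $0$ in its spectrum). When $A\in\H(E)$ your argument, like the paper's, needs the patch of passing to the zeros of $B$ (Corollary \ref{CorTieBack} and the remark after Lemma \ref{SeqEquiv}). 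Finally, your worry about the Dirac mass for $G\in\{\mathrm O,\mathrm{SO(odd)}\}$ is resolved the opposite way from what you suggest: one does not excise the atom --- by \cite[Section 5.2]{CChiM} and \cite[Theorem 23]{dB} the full norm, atom included, is isometric to $\int_\R|F(x)/E(x)|^2\,\d x$ for the $E$ built from $K$, so the whole argument goes through verbatim. In short: a correct and arguably more conceptual route for this special case (explicit extremizer plus operator theory, versus the paper's general $k$, general $\alpha$ determinant computation), but both key steps secretly use (C3)-type information that must be stated and checked.
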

Together, Theorem A and Theorem B provide an upper bound for the minimal height of the first zero in a family of $L$-functions once knows the relevant reproducing kernels explicitly. Here we address the question of existence of zeros in small intervals of the critical line at different heights. In analogy with Theorem A, we define the following sharp constant for a given real number $\alpha$:
\begin{equation}\label{EPshift}
    \mathbb A (G, \pi \Delta, \alpha) := \inf_{0 \neq f \in \mathcal H_{\pi \Delta}} \frac{\int_ \R |(x - \alpha) f(x)|^2\, W_G(x)\, \d x}{\int_\R|f(x)|^2\, W_G(x) \, \d x}.
\end{equation}
\smallskip
Given this, we establish the following result.

\begin{theorem}\label{MainThm}
    Fix $\alpha \neq 0$ real and let $\{L(s,f), \, f \in \F\}$ be a family of L-functions with an associated symmetry group $G \in\{\rm U, Sp, O, SO(even), SO(odd) \}$, and assume that GRH holds for $L$-functions in this family. In addition, suppose (\ref{KSconj}) holds for even Schwartz functions $\phi$ with $\supp  \,  \widehat \phi \subseteq (-\Delta, \Delta)$ for a fixed $\Delta > 0$. Then 
    \begin{equation*}
        \limsup_{\Q \rightarrow \infty} \, \min_{\substack{{\gamma_f}\\{f \in \F(Q)}}} \left|  \frac{\gamma_f \log c_f}{2 \pi} - \alpha \right| \leq \sqrt{\mathbb A (G, \pi \Delta, \alpha)}.
    \end{equation*} 
\end{theorem}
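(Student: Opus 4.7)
I would argue by contradiction, following the template of Hughes--Rudnick and Carneiro--Chirre--Milinovich, adapted to accommodate the shift $\alpha$. Suppose the theorem fails, so there exist $\beta > \sqrt{\mathbb A(G, \pi\Delta, \alpha)}$ and a subsequence $\Q_n \to \infty$ such that every normalized zero $\tilde\gamma_f$ of every $L(s,f)$ with $f \in \mathcal F(\Q_n)$ satisfies $|\tilde\gamma_f - \alpha|\geq \beta$. The strategy is to convert this pointwise lower bound into a spectral one via (\ref{KSconj}), and thereby contradict the definition (\ref{EPshift}) of $\mathbb A(G, \pi\Delta, \alpha)$.

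Fix a Schwartz function $g$ of exponential type $\pi\Delta'$ with $\Delta' < \Delta$, and form the even, non-negative test functions
\[
  \psi_1(x) = (x-\alpha)^2 |g(x)|^2 + (x+\alpha)^2 |g(-x)|^2, \qquad \psi_2(x) = |g(x)|^2 + |g(-x)|^2,
\]
which are Schwartz with $\supp \widehat{\psi_j} \subset [-\Delta', \Delta'] \subset (-\Delta, \Delta)$, hence admissible in (\ref{KSconj}). The first line of (\ref{SelfDual}), together with $c_{\bar f} = c_f$, makes the multiset of normalized zeros over $\mathcal F(\Q_n)$ symmetric about the origin, so relabeling $\tilde\gamma_f \mapsto -\tilde\gamma_f$ in each reflected summand gives
\[
  \sum_{\gamma_f} \psi_1(\tilde\gamma_f) = 2\sum_{\gamma_f}(\tilde\gamma_f - \alpha)^2 |g(\tilde\gamma_f)|^2, \qquad \sum_{\gamma_f}\psi_2(\tilde\gamma_f) = 2\sum_{\gamma_f}|g(\tilde\gamma_f)|^2.
\]
Feeding the standing pointwise bound $(\tilde\gamma_f - \alpha)^2 \geq \beta^2$ into each summand yields $\sum\psi_1(\tilde\gamma_f) \geq \beta^2 \sum \psi_2(\tilde\gamma_f)$ for every $f \in \mathcal F(\Q_n)$.

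Averaging over $\mathcal F(\Q_n)$ and letting $n\to\infty$, the conjecture (\ref{KSconj}) applies to $\psi_1$ and $\psi_2$ and produces $\int\psi_1 W_G \geq \beta^2 \int \psi_2 W_G$. Since each $W_G$ in (\ref{densities}) is even, the change of variable $x \mapsto -x$ collapses the reflected halves on both sides, leaving
\[
  \int_\R (x - \alpha)^2|g(x)|^2 W_G(x)\,\d x \;\geq\; \beta^2 \int_\R |g(x)|^2 W_G(x)\,\d x.
\]
Taking the infimum over admissible $g$ and letting $\Delta' \uparrow \Delta$ via a standard mollification and density argument (to recover the whole class $\mathcal H_{\pi\Delta}$ featuring in (\ref{EPshift})) produces $\beta^2 \leq \mathbb A(G, \pi\Delta, \alpha)$, the desired contradiction. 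The main obstacle is precisely this last density step: one must verify that Schwartz $g$ with frequency strictly inside $(-\Delta/2,\Delta/2)$ suffice to compute the infimum (\ref{EPshift}), and that the Dirac masses of $W_G$ for $G \in \{\mathrm{O}, \mathrm{SO(odd)}\}$ are matched on both sides by the single central zero $\tilde\gamma_f = 0$, which automatically satisfies the standing hypothesis whenever $|\alpha|\geq \beta$ (otherwise the statement of the theorem is trivial). Taking $\alpha = 0$ one recovers the proof of Theorem A from \cite{CChiM}, a useful sanity check.
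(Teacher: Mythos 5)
Your proof is correct and takes essentially the same route as the paper: both symmetrize a shifted test function built from $(x-\alpha)^2$ times a non-negative band-limited factor, apply the one-level density (\ref{KSconj}), and use the evenness of $W_G$ to collapse back to the Rayleigh quotient defining $\mathbb A(G,\pi\Delta,\alpha)$. Your contradiction framing with the pair $\psi_1,\psi_2$ (in place of the paper's single signed $\phi_\alpha = \tfrac12(\psi_1 - \beta^2\psi_2)$, whose non-negativity outside the two shifted intervals is the paper's way of encoding the same dichotomy) and your choice of $|g|^2$ up front rather than a non-negative $g$ followed by Krein's decomposition are cosmetic variants that rest on the identical density step at the end.
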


\noindent {\sc Remark:}  In orthogonal and odd orthogonal families, where, due to the sign of the functional equation, we may expect at least part of the family to trivially vanish at the central point, we can further bound
\begin{equation*}
    \limsup_{\Q \rightarrow \infty} \, \min_{\substack{{\gamma_f}\\{f \in \F(Q)}}} \left|  \frac{\gamma_f \log c_f}{2 \pi} - \alpha \right| \leq \min \left\{  \sqrt{\mathbb A (G, \pi \Delta, \alpha)}, |\alpha|\right\}.
\end{equation*} 
\smallskip

The main difference with the problem considered previously is that when $\alpha \neq 0$ the symmetry is in a sense broken. All the measures considered originally by Katz and Sarnak are symmetric around the origin, as is the ``power weight'' $|x|^{2}$. Since this no longer holds for $|x - \alpha|^{2}$, we must go through a different path to provide an estimate for the sharp constants $\mathbb A (G, \pi \Delta, \alpha)$. We are inspired by the methods of \cite{CMCP} to prove the analogous of Theorem B below. 

\begin{theorem}\label{EPSolution}
    Fix $\alpha$ real. Let $G \in \{\rm U, Sp, O, SO(even), SO(odd) \}$, and let $K = K_{G, \pi\Delta}$ be the reproducing kernel of the Hilbert space $(\H _{\pi\Delta}$, $\langle\cdot, \cdot\rangle_{G,\pi\Delta})$   
    Let $\xi_1$ be the first positive zero of the function $x \mapsto K(\alpha +x , \alpha - x)$. Then
    \begin{equation*}
        \mathbb A (G, \pi \Delta, \alpha) = \inf_{0 \neq f \in \mathcal H_{\pi \Delta}} \frac{\int_ \R |(x - \alpha) f(x)|^2 \, W_G(x)\, \d x}{\int_\R|f(x)|^2 \, W_G(x) \, \d x} = \xi_1^2.
    \end{equation*}
\end{theorem}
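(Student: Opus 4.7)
The plan is to diagonalize both norms in the Rayleigh quotient by invoking an orthogonal sampling-basis of reproducing kernels for $\H_{\pi\Delta}$ endowed with the $G$-inner product. Write $\beta := \alpha + \xi_1$ and $\beta' := \alpha - \xi_1$. The defining property $K(\beta,\beta')=0$ together with the reproducing property yields orthogonality,
\[
\langle K_\beta, K_{\beta'}\rangle_{\H_{G,\pi\Delta}} \;=\; K_\beta(\beta') \;=\; K(\beta,\beta') \;=\; 0.
\]
Since each $\H_{G,\pi\Delta}$ is a de Branges space (as exhibited in \cite{CChiM}), the standard orthogonal sampling-basis theorem in this setting extends $\{K_\beta, K_{\beta'}\}$ to an orthogonal basis $\{K_{t_n}\}_{n \in \Z}$ of $\H_{\pi\Delta}$, with $\beta$ and $\beta'$ among the real sample points $t_n$. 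The minimality of $\xi_1$ --- that it is the \emph{first} positive zero of $x \mapsto K(\alpha+x,\alpha-x)$ --- translates, through strict monotonicity of the phase function of the Hermite--Biehler function attached to $\H_{G,\pi\Delta}$, into the geometric statement that $\beta$ and $\beta'$ are the two sample points closest to $\alpha$; concretely, $|t_n - \alpha| \geq \xi_1$ for every $n$, with equality only at the two extremal indices.

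For the lower bound, I would fix $f \in \H_{\pi\Delta}$ with $(x-\alpha)f \in L^2(W_G\,\d x)$ (otherwise the Rayleigh quotient is infinite). Then $(x-\alpha)f$ is entire of exponential type at most $\pi\Delta$ and square-integrable against $W_G$, so it too lies in $\H_{\pi\Delta}$. Writing $f = \sum_n c_n K_{t_n}$ with $c_n = f(t_n)/K(t_n,t_n)$ and applying the reproducing property to $(x-\alpha)f$ to get $(x-\alpha)f = \sum_n (t_n-\alpha)\,c_n\,K_{t_n}$, Parseval's identity delivers
\[
\|(x-\alpha)f\|^2_{\H_{G,\pi\Delta}} \;=\; \sum_n (t_n-\alpha)^2\,|c_n|^2\,K(t_n,t_n) \;\geq\; \xi_1^2 \sum_n |c_n|^2\,K(t_n,t_n) \;=\; \xi_1^2\,\|f\|^2_{\H_{G,\pi\Delta}},
\]
and hence $\mathbb A(G,\pi\Delta,\alpha) \geq \xi_1^2$.

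For the matching upper bound, I would look for an extremizer in the two-dimensional subspace $\mathrm{span}\{K_\beta, K_{\beta'}\}$. Any combination $f^* = aK_\beta + bK_{\beta'}$ produces a Parseval-ratio equal to $\xi_1^2$ provided that $(x-\alpha)f^*$ actually belongs to $L^2(W_G\,\d x)$. Each of $(x-\alpha)K_\beta$ and $(x-\alpha)K_{\beta'}$ fails to be square-integrable individually --- the kernels decay like $1/(x-\beta)$ and $1/(x-\beta')$, so the $(x-\alpha)$ prefactor produces $O(1)$ oscillatory tails --- but these tails are parallel at infinity and are annihilated by a single linear constraint on $(a,b)$, which can be read off explicitly from the kernel formulas in \cite{CChiM}. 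With such a choice of $f^*$, the Parseval computation above saturates, giving $\|(x-\alpha)f^*\|^2 = \xi_1^2\|f^*\|^2$ and therefore $\mathbb A(G,\pi\Delta,\alpha) \leq \xi_1^2$.

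The chief obstacle lies in the first step: the de Branges orthogonal sampling-basis theorem yields a kernel basis for every phase parameter except at most one exceptional value. If the phase parameter singled out by $\xi_1$ turns out to be exceptional for some class $G$, I would approximate by nearby non-exceptional phase values and pass to the limit in the Rayleigh quotient. A secondary technical point is the verification of the ``closest sample points'' property of Step 1 --- which is where the minimality of $\xi_1$ enters essentially --- and follows from strict monotonicity of the phase function of the Hermite--Biehler function associated to each $\H_{G,\pi\Delta}$.
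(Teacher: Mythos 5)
Your argument is correct in substance, but it takes a genuinely different route from the paper's. The paper samples at the zeros of $A$ (or of $B$), a node set that does not depend on $\alpha$; since those nodes are not symmetric about $\alpha$, it must then solve a constrained optimization over sequences via Lagrange multipliers and a partial-fraction identity before the value emerges as the first zero of $\lambda \mapsto K(\alpha+\lambda,\alpha-\lambda)$. You instead adapt the orthogonal sampling set to $\alpha$: because $x\mapsto\varphi(\alpha+x)-\varphi(\alpha-x)$ is continuous, strictly increasing, and vanishes at $x=0$, the \emph{first} positive zero $\xi_1$ of $K(\alpha+x,\alpha-x)$ satisfies $\varphi(\alpha+\xi_1)-\varphi(\alpha-\xi_1)=\pi$ exactly, so $\alpha\pm\xi_1$ are \emph{consecutive} points of the sampling set $\{t:\varphi(t)\equiv\varphi(\alpha+\xi_1)\bmod\pi\}$ and every other node has $|t_n-\alpha|>\xi_1$; Parseval then gives the lower bound in one line and the two-dimensional candidate saturates it. This is cleaner for $k=1$ (it even absorbs the midpoint case the paper treats separately), though it verifies the announced value rather than deriving it and does not extend to the $k\ge 2$ problem of Theorem \ref{EPThm}, where the paper's machinery is needed. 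Two points deserve more care than you give them. First, your adapted phase may be the single exceptional one for which de Branges's completeness theorem fails; the upper bound is unaffected (orthogonality of $K_\beta$, $K_{\beta'}$ and the identity $(x-\alpha)f^*=\xi_1(aK_\beta-bK_{\beta'})$ need no completeness), but the lower bound is, and your limiting argument over nearby phases should be written out, noting that the two nearest nodes converge to $\alpha\pm\xi_1$ by continuity of $\varphi^{-1}$. Second, in the upper bound the non-integrable tails of $(x-\alpha)K_\beta$ and $(x-\alpha)K_{\beta'}$ are, up to the factor $\pi^{-1}$, the elements $B(\cdot)A(\beta)-A(\cdot)B(\beta)$ and $B(\cdot)A(\beta')-A(\cdot)B(\beta')$ of $\mathrm{span}\{A,B\}$; they are proportional, hence removable by one linear condition on $(a,b)$, precisely because $A(\beta)B(\beta')-A(\beta')B(\beta)=-2\pi\xi_1\,K(\beta,\beta')=0$ --- this is where the defining property of $\xi_1$ enters the upper bound, and it should be stated explicitly rather than read off numerically from the kernel formulas.
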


\noindent {\sc Remark.} The above solution for the case $\alpha = 0$ is, of course, the same as that given by Theorem B. The first positive zero of $x \mapsto K(x , -x)$ and that of $x \mapsto \mathrm{Re}\,((1-ix)K(i,x))$ are both equal to the first positive zero of the function $x \mapsto A(x)$, an auxiliary function of the associated de Branges space, which will be defined in the next section.

\smallskip

\noindent {\sc Remark.} Notice from the above formula that the case of unitary symmetry $G = \rm U$ has the same solution for all $\alpha$ since $K(\alpha+x, \alpha -x) = \frac{\sin 2 \pi \Delta  x}{2 \pi x}$ (from this we can also see that $\mathbb A( {\mathrm U, \pi \Delta, \alpha}) \equiv \frac{1}{4 \Delta^2})$. One can reach the same conclusion by observing that the density $W_{\rm U}(x) \equiv 1$ is translation invariant.

\smallskip

We illustrate the bounds provided by the above theorems in Figure 1, where, for the five symmetry groups and $\Delta \in \{1, \frac{4}{3}, \frac{3}{2}, 2\}$, we graph the functions $\alpha \mapsto \sqrt{\mathbb A(G, \pi \Delta, \alpha)}$ using the explicit reproducing kernels computed in \cite{CChiM}. Our choice of values of $\Delta$ represented in the figure is inspired by one-level density results present in the literature. For instance, the  original result of Hughes and Rudnick for Dirichlet $L$-functions was proven for $\Delta = 2$ \cite[Theorem 3.1]{HR}. In \cite{ILS}, Iwaniec, Luo, and Sarnak establish such results in \cite[Theorems 1.1 and 1.3]{ILS} for certain  orthogonal families when $\Delta = 2$, and in \cite[Theorem 1.5]{ILS} for a particular symplectic family when $\Delta = \frac{3}{2}$, among other things. As a last example, Fouvry and Iwaniec \cite{FI} considered the family of $L$-functions $L(s, \Psi)$ where $\Psi$ runs over the characters of the ideal class group $\mc{C}\ell(K)$ of the imaginary quadratic field $K = \mathbb Q(\sqrt{-D})$, with $D > 3$ a square-free number congruent to $3$ modulo $4$. They showed (\ref{KSconj}) holds for this family when $\Delta = 1$ with $G = \rm Sp$ in their \cite[Theorem 1.1]{FI} and, by taking a certain average over $D$, extended it to $\Delta = \frac{4}{3}$ in \cite[Theorem 1.2]{FI}. 

Finally, a qualitative behavior that can be inferred from the graphs of Figure 1 is that as $\alpha$ grows large the values $\mathbb A (G, \pi \Delta, \alpha)$ get closer and closer to the constant value $\mathbb A( {\mathrm U, \pi \Delta, 0}) = \frac{1}{4 \Delta^2}$. Our next result shows that this is indeed the case.

\begin{theorem}\label{LimitThm}
    For all $G \in\{\rm U, Sp, O, SO(even), SO(odd) \}$ we have
    \begin{equation*}
        \lim_{|\alpha| \to \infty} \mathbb A(G,\pi \Delta, \alpha) = \mathbb A( {\mathrm U, \pi \Delta, 0}) = \frac{1}{4 \Delta^2}.
    \end{equation*}
\end{theorem}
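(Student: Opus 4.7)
The plan is to match upper and lower bounds on the infimum in the variational formulation
\[
\mathbb A(G,\pi\Delta,\alpha)\;=\;\inf_{0\neq g\in\H_{\pi\Delta}}\frac{\int_\R|y\,g(y)|^2\,W_G(y+\alpha)\,\d y}{\int_\R|g(y)|^2\,W_G(y+\alpha)\,\d y},
\]
obtained from \eqref{EPshift} via the substitution $y=x-\alpha$; the target value is $\mathbb A(\mathrm U,\pi\Delta,0)=1/(4\Delta^2)$, which is the classical sharp Paley--Wiener constant.

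For the upper bound, I plug in the classical Paley--Wiener extremizer $g_0\in\H_{\pi\Delta}$, whose Fourier transform is $\widehat{g_0}(\xi)=\cos(\pi\xi/\Delta)\mathbf{1}_{[-\Delta/2,\Delta/2]}(\xi)$, so that $\int|yg_0|^2/\int|g_0|^2=1/(4\Delta^2)$ with decay $g_0(y)=O(1/y^2)$ and hence $|yg_0|^2=O(1/y^2)$. Writing $W_G(y+\alpha)=1+(W_G(y+\alpha)-1)$, the correction is a combination of a bounded sinc-type oscillation $\pm\sin(2\pi u)/(2\pi u)$ with $u=y+\alpha$ (for Sp, SO(even), SO(odd)) and/or a Dirac mass at $y=-\alpha$ (for O, SO(odd)). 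Splitting at $|y|=|\alpha|/2$ and using $|\sin(2\pi u)/(2\pi u)|\le 1/(2\pi|u|)$, the inner region contributes $O(1/|\alpha|)$ via the uniform smallness of the sinc, and the outer region contributes $O(1/|\alpha|)$ via the tail bound $\int_{|y|>|\alpha|/2}|yg_0|^2=O(1/|\alpha|)$; the Dirac mass contributes $O(\alpha^2|g_0(-\alpha)|^2)=O(1/\alpha^2)$ to the numerator and $O(|g_0(-\alpha)|^2)=O(1/\alpha^4)$ to the denominator. Both integrals converge to their Paley--Wiener counterparts, and the ratio is $1/(4\Delta^2)+o(1)$.

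For the lower bound, take a sequence $\alpha_n\to\pm\infty$ along which $\mathbb A(G,\pi\Delta,\alpha_n)\to L:=\liminf_{|\alpha|\to\infty}\mathbb A(G,\pi\Delta,\alpha)$ and approximate extremizers $g_n\in\H_{\pi\Delta}$ with $\int|g_n|^2 W_G(y+\alpha_n)\,\d y=1$ and $\int|yg_n|^2 W_G(y+\alpha_n)\,\d y\to L$. The first technical step is a uniform bound $\|g_n\|_{L^2(\R)}\le C$: outside a small neighborhood of $y=-\alpha_n$ the weight is uniformly bounded below, while inside that neighborhood a band-limited uncertainty estimate (using that $g_n\in\H_{\pi\Delta}$ cannot concentrate more than a fixed fraction of its $L^2$-mass in an interval of length $<1/(2\Delta)$) prevents the unweighted $L^2$-mass from escaping into the zero set of the weight. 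By the pointwise Paley--Wiener bound $|g_n(z)|\le\sqrt{\Delta}\,\|g_n\|_{L^2(\R)}\,e^{\pi\Delta|\mathrm{Im}\,z|}$, the family $\{g_n\}$ is normal on $\C$ and, after passing to a subsequence, converges uniformly on compacts of $\C$ to an entire function $g$ of exponential type $\le\pi\Delta$. Uniform convergence on any $[-A,A]$ (together with $W_G(y+\alpha_n)\to 1$ uniformly there for $|\alpha_n|>2A$, since the Dirac mass sits outside) combined with the Chebyshev tail bound $\int_{|y|>A}|g_n|^2 W_G(y+\alpha_n)\le C/A^2$ gives, upon letting $A\to\infty$,
\[
\int_\R|g|^2=1,\qquad \int_\R|yg|^2\le L.
\]
The classical Paley--Wiener inequality $\int|yg|^2\ge(1/(4\Delta^2))\int|g|^2$ then forces $L\ge 1/(4\Delta^2)$, completing the proof.

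The main obstacle is the uniform $L^2(\R)$-bound on the $g_n$ in the lower-bound step. Because $W_{\mathrm{Sp}}$ and the continuous part of $W_{\mathrm{SO(odd)}}$ vanish quadratically at the origin, the weighted normalization does not by itself exclude $L^2$-mass of $g_n$ from concentrating in a shrinking neighborhood of $y=-\alpha_n$; it is precisely the band-limited uncertainty estimate, depending on the constraint $g_n\in\H_{\pi\Delta}$, that rules out such concentration and closes the compactness argument.
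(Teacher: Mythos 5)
Your proposal is correct and follows essentially the same route as the paper: the upper bound comes from testing with a fixed function and using Riemann--Lebesgue/dominated convergence to kill the sinc and Dirac contributions, and the lower bound comes from normalizing near-extremizers, invoking a Nazarov-type uncertainty bound to get a uniform $L^2$ bound (ruling out concentration where $W_G(\cdot+\alpha_n)$ vanishes), extracting a limit by compactness, using a Chebyshev tail bound to prevent mass escape, and closing with the Paley--Wiener inequality. The differences (you use the explicit Paley--Wiener extremizer and normalize the denominator, the paper uses an arbitrary test function and normalizes the numerator; you extract the limit via normal families while the paper uses weak $L^2$-compactness plus the reproducing kernel; you invoke $\int|yg|^2\geq\frac{1}{4\Delta^2}\int|g|^2$ directly while the paper reaches the same conclusion through a chain of inequalities) are cosmetic.
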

We prove this in Section \ref{LimitProof}. To show this limit holds we work from the expression of the densities (\ref{densities}) and, through this proof, illustrate the relevance of Fourier uncertainty principles and of the reproducing kernel structure of the spaces under investigation.   

\begin{figure}[t]
    \begin{subfigure}{0.4\textwidth}
        \includegraphics[width=1\linewidth, height=4.5cm]{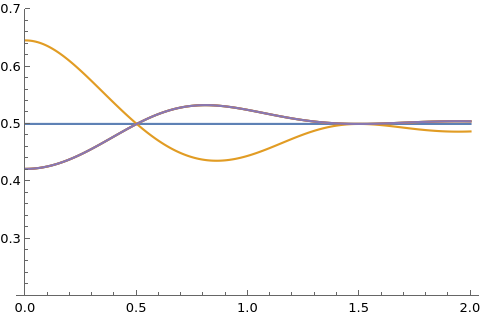} 
        \caption{$\Delta = 1.$}
    \end{subfigure}
    \begin{subfigure}{0.4\textwidth}
        \includegraphics[width=1\linewidth, height=4.5cm]{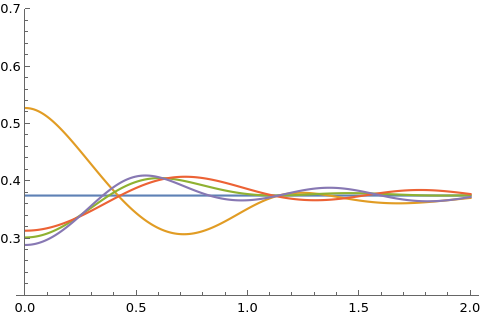}
     \caption{$\Delta = \frac{4}{3}$.}        
    \end{subfigure}
    \begin{subfigure}{0.1\textwidth}
        \includegraphics[width=1\linewidth, height=4.5cm]{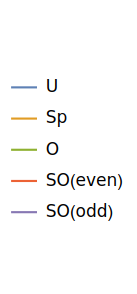}
    \end{subfigure}
      \begin{subfigure}{0.4\textwidth}
        \includegraphics[width=1\linewidth, height=4.5cm]{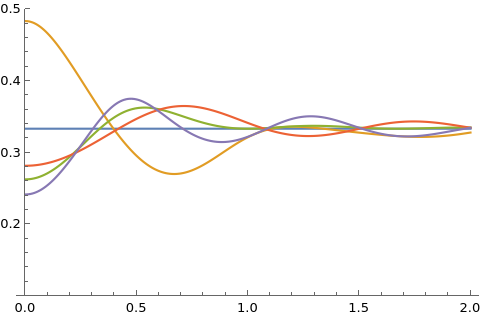} 
        \caption{$\Delta = \frac{3}{2}.$}
    \end{subfigure}
    \begin{subfigure}{0.4\textwidth}
        \includegraphics[width=1\linewidth, height=4.5cm]{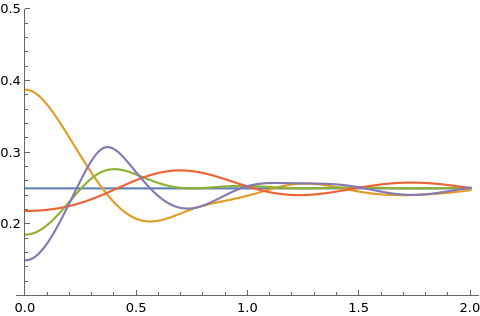}
     \caption{$\Delta = 2$.}        
    \end{subfigure}
    \begin{subfigure}{0.1\textwidth}
        \includegraphics[width=1\linewidth, height=4.5cm]{new-plots/newcaption.png}
    \end{subfigure}
    \caption{Values of $\sqrt{\mathbb A(G, \pi \Delta, \alpha)}$ as $\alpha$ varies, for different symmetry groups and different values of $\Delta$. Note $K_{\rm O, \pi \Delta} = K_{\rm SO(even), \pi \Delta} = K_{\rm SO(odd), \pi \Delta}$ when $\Delta = 1$.}
\end{figure}

\subsection{An extremal problem in de Branges spaces}

We connect the problem of determining the sharp constants $\mathbb A (G, \pi \Delta, \alpha)$ to the theory of de Branges spaces. Theorem \ref{EPSolution} is a consequence of the more general Theorem \ref{EPThm}, which is about sharp inequalities related to the operators of multiplication by $(z - \alpha)^{k}$ in a de Branges space, with $k \in \N$ and $\alpha \in \R$. This theorem addresses an extremal problem which is a generalization of the one studied by Carneiro et al in \cite[Section 8]{CMCP}, who take up the case $\alpha = 0$. In fact, this discussion parallels that of \cite{CMCP} in many respects. However, as mentioned previously, the broken symmetry introduces new challenges to consider.

We first recall some basic notions from this theory. See de Branges's monograph \cite{dB} for a more detailed exposition. We use the notation $f^*(z) = \overline{f(\bar z)}$ throughout.  A function that is analytic in the upper half-plane $\C^+$ is said to be of \emph{bounded type} if it can be expressed as the quotient of two bounded analytic functions in $\C^+$. For such a function $f$, one has \cite[Theorems 9 and 10]{dB} 
$$v(f) := \limsup_{y \rightarrow +\infty}\, {y}^{-1}\,{\log |f(iy)|} < \infty,$$
and we call $v(f)$ the mean type of $f$.

Let $E: \C \rightarrow \C$ be a \emph{Hermite-Biehler} function, that is, an entire function satisfying the inequality
\begin{equation*}
    |E^*(z)| < |E(z)|
\end{equation*}
for all $z \in \C^+$. The de Branges space $\H (E)$ associated to $E$ is the space of entire functions $f: \C \rightarrow \C$ such that
\begin{equation*}
    \|f\|_{\H (E)}^2 :=  \int_{-\infty}^\infty \left| \frac{f(x )}{E(x)}\right|^2 \d x < \infty,
\end{equation*}
and such that $f/E$ and $f^*/E$ are of bounded type with non-positive mean type. This space forms a reproducing kernel Hilbert space with the inner product
\begin{equation*}
    \langle f,g \rangle_{\H (E)} := \int_{-\infty}^\infty  \frac{f(x ) \overline{g(x)}}{|E(x)|^2} \, \d x.
\end{equation*}
To every such $E$ there exist unique real entire functions\footnote{We say an entire function $F: \C \to \C$ is real entire if its restriction to $\R$ is real-valued. Equivalently, $F(z) = F^*(z)$.}  $A: \C \rightarrow \C$ and $B:\C \rightarrow \C$ such that $E(z) = A(z) - iB(z)$. These companion functions must be 
\begin{equation*}
    A(z) = \frac{1}{2} \left(E(z) + E^*(z) \right) \text{  and  }  B(z) = \frac{i}{2} \left(E(z) - E^*(z) \right),
\end{equation*}
and we can write the reproducing kernels for these spaces in terms of these functions. For every $w \in \C$, the corresponding reproducing kernel is given by (see \cite[Theorem 19]{dB})
\begin{equation}\label{RepKernel}
    K(w,z) = \frac{B(z)A(\overline w) - A(z)B(\overline w)}{\pi (z -\overline w) },
\end{equation}
and, when $w = \bar z$,
\begin{equation*}
    K(\bar z,z) = \frac{1}{\pi}\left( B'(z) A(z) - A'(z)B(z) \right).
\end{equation*}
A classical example is the Paley-Wiener space $\H_{\rm U, \pi \Delta}$, which is the space $\H(E)$ when $E(z) = e^{-i \pi \Delta z}$. In this case the auxiliary functions are $A(z) = \cos \pi \Delta z$ and $B(z) = \sin \pi \Delta z$. 

We will be considering the following extremal problem, a generalization of the one defined in (\ref{EPshift}). 

\smallskip
\noindent \emph{Extremal Problem}: Fix $k \in \N$ and let $\X_k := \{ f \in \H (E) : z^k f \in \H (E) \}$. Find for $\alpha \in \R$
\begin{equation}\label{EP}
    (\mathbb{EP})(E,k,\alpha) := \inf_{0 \neq f \in \X_k} \frac{\|(z - \alpha)^k f\|_{\H (E)}^2}{\|f\|_{\H (E)}^2}.
\end{equation}

The fact that we can determine the solutions of equations of the form $(z - \alpha)^k = \eta$ for $\eta > 0$ allows us to adapt the argument of \cite{CMCP} to pin down a precise a value of $(\mathbb{EP})(E,k,\alpha)$ when the function $E$ satisfies the following hypotheses:
\begin{enumerate}[\hspace{1cm}(C1)]
        \item $E$ has no real zeros;
        \item the function $z \mapsto E(iz)$ is real entire;
        \item $A \notin \H (E)$.
\end{enumerate}
 Condition (C2) is equivalent to the fact that the auxiliary functions $A$ and $B$ are even and odd, respectively, while conditions (C1) and (C3) allow for the use of the following interpolation formula in de Branges spaces: 
 \begin{equation}\label{Int}
     f(z) = \sum_{A(\xi) = 0} \frac{f(\xi)}{K(\xi, \xi)}K(\xi, z)
 \end{equation}
 for all $f \in \H (E)$, with the sum above converging both uniformly in compact sets and in the norm of $\H (E)$. From (C1) and (C3), we also have  Parseval's identity
 \begin{equation}\label{Pars}
     \|f\|_{\H(E)}^2 = \sum_{A(\xi) = 0} \frac{|f(\xi)|^2}{K(\xi, \xi)}.
 \end{equation}
 For these results consult \cite[Theorem 22]{dB}.

 Let
 \begin{equation*}
     0 < \xi_1 < \xi_2 < \ldots
 \end{equation*}
 denote the sequence of positive zeros of $A$ and define the meromorphic function
 \begin{equation*}
     C(z) := \frac{B(z)}{A(z)}.
 \end{equation*}
 We can obtain the following result concerning the extremal problem defined in (\ref{EP}).

\begin{theorem}\label{EPThm}
    Suppose $E$ is a Hermite-Biehler function satisfying (C1), (C2), (C3) and  that $\X_k \neq \{0\}$. Setting $\lambda_0 := \left((\mathbb{EP})(E,k,\alpha)\right)^{1/2k}$, we have that $\lambda_0$ is the first positive zero of the equation
        \begin{equation}\label{RealSol}
            A(\alpha + \lambda)A(\alpha - \lambda) \det \V(\lambda) = 0,
        \end{equation}
        where $\V(\lambda)$ is the $k \times k$ matrix with entries
        \begin{equation*}
            \big( \V(\lambda) \big)_{lj} = \sum_{r = 0}^{2k - 1} \omega^{-r(l + j - 1)} C(\alpha + \omega^r\lambda) \qquad (1 \leq l,j \leq k)
        \end{equation*}
        and $\omega := e^{i \pi/ k}$.
\end{theorem}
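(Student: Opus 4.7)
My plan is to adapt the Lagrangian/reproducing-kernel strategy of \cite{CMCP} (which handles $\alpha=0$) to the translated setting, using the $2k$-th roots of unity $\omega^{r}=e^{i\pi r/k}$ to organize the broken symmetry. The first step is to reformulate the Rayleigh quotient in (\ref{EP}) via the interpolation (\ref{Int}) and Parseval (\ref{Pars}) identities at the zeros of $A$, giving
\[
\|f\|_{\H(E)}^{2}=\sum_{A(\xi)=0}\frac{|f(\xi)|^{2}}{K(\xi,\xi)}, \qquad \|(z-\alpha)^{k}f\|_{\H(E)}^{2}=\sum_{A(\xi)=0}\frac{(\xi-\alpha)^{2k}|f(\xi)|^{2}}{K(\xi,\xi)}.
\]
The extremal problem is therefore the infimum of a weighted Rayleigh quotient for the diagonal multiplication by $(\xi-\alpha)^{2k}$, restricted to the proper subspace $\X_{k}\subset\H(E)$. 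To make this restriction explicit I would use the polynomial identity $(z-\alpha)^{k}-(\xi-\alpha)^{k}=(z-\xi)\,P_{\xi}(z)$ with $P_{\xi}(z)=\sum_{j=0}^{k-1}(z-\alpha)^{k-1-j}(\xi-\alpha)^{j}$: comparing the direct product of $(z-\alpha)^{k}$ with the interpolation series for $f$ to the interpolation series of $(z-\alpha)^{k}f$ (valid exactly when this function lies in $\H(E)$), the discrepancy $A(z)\sum_{\xi}f(\xi)P_{\xi}(z)/A'(\xi)$ is a polynomial multiple of $A$ in $z$ and must vanish identically for every $f\in\X_{k}$. This yields the $k$ moment conditions
\[
\sum_{A(\xi)=0}\frac{f(\xi)(\xi-\alpha)^{j}}{A'(\xi)}=0, \qquad j=0,1,\ldots,k-1,
\]
which generalize the single orthogonality used in \cite{CMCP} at $\alpha=0$, $k=1$.

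With the problem recast as a constrained minimization in sequence space, a Lagrange multiplier computation (away from the degenerate regime $(\xi-\alpha)^{2k}=\mu$ with $A(\xi)=0$) shows that the critical values take the form
\[
f(\xi)=\frac{B(\xi)\,p(\xi-\alpha)}{(\xi-\alpha)^{2k}-\mu}, \qquad p(t)=\sum_{j=0}^{k-1}\beta_{j}t^{j},
\]
and inserting this back into the moment conditions gives a $k\times k$ homogeneous system $M(\mu)\beta=0$ with entries
\[
M_{mj}(\mu)=\sum_{A(\xi)=0}\frac{B(\xi)(\xi-\alpha)^{m+j}}{A'(\xi)\bigl((\xi-\alpha)^{2k}-\mu\bigr)}, \qquad 0\leq m,j\leq k-1.
\]
The smallest positive $\mu=\lambda_{0}^{2k}$ with $\det M(\mu)=0$ would then be the candidate for $(\mathbb{EP})(E,k,\alpha)$. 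The key computational step is to identify this with $\det\V(\lambda)=0$: setting $\mu=\lambda^{2k}$ and using the partial fraction decomposition
\[
\frac{1}{(\xi-\alpha)^{2k}-\lambda^{2k}}=\frac{1}{2k\,\lambda^{2k-1}}\sum_{r=0}^{2k-1}\frac{\omega^{r}}{\xi-(\alpha+\omega^{r}\lambda)},
\]
combined with the Mittag--Leffler-type expansion $C(z)=\sum_{A(\xi)=0}[B(\xi)/A'(\xi)]/(z-\xi)$ (valid under (C1)--(C3) since $C$ is a Herglotz function with simple real poles), the inner sums over $\xi$ collapse into linear combinations of the boundary values $C(\alpha+\omega^{r}\lambda)$. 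After pulling out diagonal factors of the form $\lambda^{m}$ and $\lambda^{j}$ from rows and columns, $M(\lambda^{2k})$ is similar -- via a non-vanishing diagonal matrix -- to a constant multiple of $\overline{\V(\lambda)}$, so that $\det M(\lambda^{2k})=0$ if and only if $\det\V(\lambda)=0$.

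The remaining case is the degenerate one, where $\alpha+\lambda_{0}$ or $\alpha-\lambda_{0}$ (the only real numbers among $\{\alpha+\omega^{r}\lambda_{0}\}$) is itself a zero of $A$. Here the Lagrangian denominator vanishes at a real zero of $A$, $M$ develops a genuine singularity, and one must instead produce the infimum as a limit: constructing a sequence $\{f_{n}\}\subset\X_{k}$ whose mass concentrates at the singular zero $\xi_{\ast}=\alpha\pm\lambda_{0}$, with the free values at other zeros of $A$ used to satisfy the moment constraints (and tending to zero), so that $\|(z-\alpha)^{k}f_{n}\|^{2}/\|f_{n}\|^{2}\to(\xi_{\ast}-\alpha)^{2k}=\lambda_{0}^{2k}$. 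This case is precisely what the factor $A(\alpha+\lambda)A(\alpha-\lambda)$ in (\ref{RealSol}) records. Combining the two regimes gives $(\mathbb{EP})(E,k,\alpha)=\lambda_{0}^{2k}$ where $\lambda_{0}$ is the first positive root of (\ref{RealSol}). I expect the main difficulty to be twofold: (i) upgrading the formal Lagrange multiplier analysis to a rigorous infinite-dimensional lower bound, which requires showing that the critical-point form of $f(\xi)$ really captures the infimum and that minimizing sequences behave well with respect to the moment constraints; and (ii) the algebraic bookkeeping needed to match $M(\mu)$ with the specific matrix $\V(\lambda)$ displayed in the statement, while ensuring that the degenerate factor $A(\alpha\pm\lambda)$ is correctly incorporated and not double-counted when the first root of (\ref{RealSol}) comes from that factor rather than from $\det\V(\lambda)$.
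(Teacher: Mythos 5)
Your outline follows the same broad contour as the paper's proof — reformulate via the interpolation and Parseval identities into a sequence-space Rayleigh quotient with $k$ moment constraints, run a Lagrange-multiplier analysis, and collapse the resulting $k\times k$ determinantal condition via a $2k$-th-roots-of-unity partial fraction decomposition together with the Mittag--Leffler series $C(z)=\sum_{A(\xi)=0}\tfrac{B(\xi)/A'(\xi)}{z-\xi}$. But there is a structural device you are missing, and it matters. The paper first singles out the set $I$ of the $k$ zeros of $A$ closest to $\alpha$, expresses the pivot coordinates $(a_{i_1},\dots,a_{i_k})$ in terms of the free ones via the Vandermonde matrix $\mathcal T$, and then establishes the \emph{strict} inequality (\ref{StrictIneq}): $\lambda_0<|\xi_\ell-\alpha|$ for every $\ell\notin I$. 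This is precisely what makes the denominator $(\xi_m-\alpha)^{2k}-\lambda_0^{2k}$ nonvanishing for every \emph{free} index $m\notin I$, so the Lagrange system can be honestly solved. Your analysis demands this nonvanishing for \emph{all} zeros $\xi$, including $\xi\in I$, which is not a consequence of anything you write; it is genuinely possible that $\lambda_0=|\xi_i-\alpha|$ for some $i\in I$. Correspondingly, your final equation is $\det \V(\lambda)=0$, which corresponds only to $\det\Q$; the factors $A(\alpha\pm\lambda)$ in (\ref{RealSol}) are not cosmetic, they arise from the product $\prod_{i\in I}\bigl((\xi_i-\alpha)^{2k}-\lambda^{2k}\bigr)$ contributed by the pivot block of $\det\mathcal W$, and $\det\V$ has a \emph{pole} (not a zero) at $\lambda=|\xi_i-\alpha|$. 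Without the pivot-index reduction you cannot produce the correct factored equation.

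Your handling of the degenerate case, and the implicit completeness claim, are also not sound. You propose a minimizing sequence concentrating at a single zero $\xi_*=\alpha\pm\lambda_0$ with the remaining entries used to satisfy the constraints and tending to $0$; but the $k$ constraints force non-negligible mass at other zeros (at minimum, the tails cannot be sent to zero without blowing up the numerator, since $\sum_n c_n/\xi_n^2$ converges but $\sum_n c_n$ and $\sum_n c_n(\xi_n-\alpha)^{2k}$ do not behave as needed), and the Rayleigh quotient for such sequences stays strictly above $(\xi_*-\alpha)^{2k}$. In the one degenerate configuration the paper must treat separately ($k=1$ and $\alpha$ the midpoint of two consecutive zeros $\xi_i<\xi_{i+1}$), the infimum is \emph{attained} by a function supported at \emph{both} $\xi_i$ and $\xi_{i+1}$ with $f(\xi_i)/A'(\xi_i)=-f(\xi_{i+1})/A'(\xi_{i+1})$ — not approached via single-point concentration — and verifying that $\lambda_0=\tfrac12(\xi_{i+1}-\xi_i)$ is the \emph{first} positive root of (\ref{RealSol}) uses the strict monotonicity of the Hermite--Biehler phase function, not approximation. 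Finally, the Lagrange computation only shows that $\lambda_0^{2k}$ is \emph{a} root of the determinantal equation; you still need the converse direction, which the paper handles by showing that any root $\lambda^*\in(0,\min_{\ell\notin I}|\xi_\ell-\alpha|)$ of $\det\mathcal W$ can be turned back into a feasible sequence achieving the value $(\lambda^*)^{2k}$.
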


We prove the above theorem in Section \ref{dBSec}. As in \cite{CMCP}, we convert the problem in $\H(E)$ to a problem in an appropriate space of sequences, which is Lemma \ref{SeqEquiv}. Then we discuss how Theorem \ref{EPSolution} may be seen as its consequence in Corollary \ref{CorTieBack}.

\section{Proof of Theorem 1: Existence of Zeros in Low-Lying Intervals}

Let us recall briefly the idea of Hughes and Rudnick to find the smallest interval of the central point for which there exists a zero in $\{L(s,f), \, f \in \F\}$. They construct an even Schwartz function $\phi$ with $\supp \, \widehat \phi  \subseteq (-\Delta, \Delta)$, which satisfies the properties
\begin{enumerate}[(i)]
    \item $\phi(x) \leq 0$ for $|x| \leq \beta$,
    \item $\phi(x) \geq 0$ for $|x| > \beta$
\end{enumerate}
for a given $\beta>0$ and apply the one-level density (\ref{KSconj}) to it. Such a $\phi$ is constructed by taking a non-negative $g$ Schwartz with $\supp \, \widehat g  \subseteq (-\Delta, \Delta)$ and setting $\phi(x) = (x^2 - \beta^2) \, g(x)$. 

The key insight here is that if we choose an appropriate $\beta>0$, we can find a $g$ for which
\begin{equation}\label{NegCond}
     \int_\R \phi(x)\, W_G(x)\, \d x = \int_\R (x^2 - \beta^2) \, g(x)\, W_G(x)\, \d x < 0
\end{equation}
and this implies by (\ref{KSconj}) and properties (i) and (ii) that for large values of $\Q$ the family $\{L(s,f), \, f \in \F(\Q)\}$ has a zero in the interval $[-\beta,\beta]$. Of course, it is in our interest to make the value of $\beta$ as small as possible, and this is where the extremal problem in an appropriate function space arises, for (\ref{NegCond}) is equivalent to
\begin{equation}\label{HRIneq}
    \frac{\int_ \R x^2 \, g(x)\, W_G(x)\, \d x}{\int_\R g(x) \, W_G(x) \, \d x} < \beta^2.
\end{equation}
 From here, we can get to the formulation contemplated in Theorem B. Indeed, Krein's decomposition theorem states that non-negative entire functions of exponential type at most $2 \pi \Delta$ can be decomposed into $g(x) = |f(x)|^2$, with $f$ entire of exponential type at most $\pi \Delta$ (see \cite[p. 154]{A}), so that inequality (\ref{HRIneq}) becomes
 \begin{equation*}
    \frac{\int_ \R x^2 \, |f(x)|^2 \, W_G(x)\, \d x}{\int_\R |f(x)|^2 \, W_G(x) \, \d x} < \beta^2
\end{equation*}
 and, in trying to minimize $\beta$, we are led to consider the extremal problem
\begin{equation*}
    \inf_{\substack{{0 \neq f \in \mathcal H_{\pi \Delta}}\\{f \in \mc S(\R)}}} \frac{\int_ \R |x f(x)|^2 \, W_G(x)\, \d x}{\int_\R|f(x)|^2\, W_G(x) \, \d x}.
\end{equation*}
By the density of functions of exponential type at most $\pi \Delta$ which are Schwartz on the real line, we can take the infimum above over the whole space $\H_{\pi \Delta}$ (see \cite[proof of Theorem 2]{CChiM}).

Now one would try to construct, for an arbitrary $\alpha \in \R$, an even Schwartz function $\phi_\alpha$ with  $\supp \, \widehat \phi_\alpha  \subseteq (-\Delta, \Delta)$ satisfying

\begin{enumerate}[(i')]
    \item $\phi_\alpha(x) \leq 0$ for $|x- \alpha| \leq \beta$;
    \item $\phi_\alpha(x) \geq 0$ for $|x - \alpha| > \beta$,
\end{enumerate}
in the same way. However, given the fact $\phi_\alpha$ must be even, the above properties cannot hold simultaneously when $\alpha \neq 0$. A more symmetric set of conditions would be

\begin{enumerate}[(I)]
    \item $\phi_\alpha(x) \leq 0$ for $x \in [\alpha - \beta, \alpha + \beta] \cup [-\alpha - \beta, -\alpha + \beta]$,
    \item $\phi_\alpha(x) \geq 0$ for $x \notin [\alpha - \beta, \alpha + \beta] \cup [-\alpha - \beta, -\alpha + \beta]$.
\end{enumerate}

Trying to mimic the previous construction for $\alpha \neq 0$ would give us the candidate $((x - \alpha)^2 - \beta^2)((x+ \alpha)^2 - \beta^2)\, g(x)$, for a non-negative even Schwartz function $g$ with $\supp \, \widehat g  \subseteq (-\Delta, \Delta)$. This leads to an overly complicated extremal problem. A simple observation allows us to fix this. On second inspection, one may notice that property (i), and its analogous (I), are in fact superfluous to conclude the existence of a zero. Because if (\ref{NegCond}) holds, only the fact that $\phi(x) \geq 0$ when $x \notin [\alpha - \beta, \alpha + \beta] \cup [-\alpha - \beta, -\alpha + \beta]$, imply there must be a zero in the family of $L$-functions in $[\alpha - \beta, \alpha + \beta] \cup [-\alpha - \beta, -\alpha + \beta]$ given that the integral is negative.

We may thus propose the following: define $\varphi_\alpha(x)= ((x -\alpha)^2-\beta^2))\, g(x)$, with $g$ non-negative, Schwartz and with $\supp \, \widehat g  \subseteq (-\Delta, \Delta)$. We do not suppose $g$ is even this time, but instead take our candidate to be the symmetrized version of $\varphi_\alpha$
\begin{equation*}
    \phi_\alpha(x) = \frac{ \varphi_\alpha(x) + \varphi_\alpha(-x)}{2}.
\end{equation*}
The function $\phi_\alpha$ is, of course, Schwartz with $\supp \, \widehat {\phi_\alpha} \subseteq (-\Delta, \Delta)$ and satisfies property (II). We can apply the one-level density, and if we obtained again that
\begin{equation}\label{NegCondA}
    \lim_{\Q\rightarrow \infty}\frac{1}{|\mathcal F(\Q)|}\sum_{f \in \mathcal F(\Q)} \sum_{\gamma_f} \phi_\alpha\left( \frac{\gamma_f \log c_f}{2 \pi}\right) = \int_\R \phi_\alpha(x) \, W_{G} (x) \, \d x < 0,
\end{equation}
it would imply together with the properties (\ref{SelfDual}) that
\begin{equation}\label{ZeroExistShift}
    \limsup_{\Q \rightarrow \infty} \, \min_{\substack{{\gamma_f}\\{f \in \F(Q)}}} \left|  \frac{\gamma_f \log c_f}{2 \pi} - \alpha \right| \leq \beta.
\end{equation}

Let us rewrite the right-hand side of (\ref{NegCondA}):
\begin{align*}
    0 &>  \int_\R \phi_\alpha(x) \, W_{G} (x) \, \d x =\int_\R \frac{ \varphi_\alpha(x) + \varphi_\alpha(-x)}{2} \, W_{G} (x) \, \d x  \\
    &= \frac{1}{2}\int_\R \varphi_\alpha(x) \, W_{G} (x) \, \d x + \frac{1}{2}\int_\R \varphi_\alpha(-x) \, W_{G} (x) \, \d x \\
    &= \frac{1}{2}\int_\R \varphi_\alpha(x) \, W_{G} (x) \, \d x + \frac{1}{2}\int_\R \varphi_\alpha(x) \, W_{G} (-x) \, \d x  \\
    &= \int_\R \varphi_\alpha(x) \, W_{G} (x) \, \d x \\
    &=\int_\R ((x -\alpha)^2-\beta^2)) \, g(x) \, W_{G} (x) \, \d x  ,
\end{align*}
by the evenness of the density $W_G(x)$. This, in turn, is equivalent to
\begin{equation*}
    \frac{\int_ \R (x - \alpha)^2 \, g(x) \, W_G(x)\, \d x}{\int_\R g(x) \, W_G(x) \, \d x} < \beta^2,
\end{equation*}
so that, again, by Krein's decomposition and the density of Schwartz functions, we obtain the relation to our desired extremal problem given by (\ref{EPshift}). Combining this observation with (\ref{ZeroExistShift}), we arrive at the content of Theorem \ref{MainThm}.

\section{Proof of Theorem \ref{LimitThm}: the Limit of \texorpdfstring{$\mathbb A (G, \pi \Delta, \alpha) $}{A(G, a)} as \texorpdfstring{$|\alpha| \to \infty$}{a to infinity}}\label{LimitProof}

    First notice that $f(z) \in \H_{\pi \Delta}$ if and only if $f(z-\alpha)\in  \H_{\pi \Delta}$, so by a simple change of variables  (\ref{EPshift}) can be rewritten to
    \begin{equation*}
        \mathbb A (G, \pi \Delta, \alpha) = \inf_{0 \neq f \in \mathcal H_{\pi \Delta}} \frac{\int_ \R |x f(x)|^2 \, W_G(x + \alpha)\, \d x}{\int_\R|f(x)|^2\, W_G(x + \alpha) \, \d x}.
    \end{equation*}
    Moreover, by the evenness of $W_G$, we may consider $\alpha > 0$ without loss of generality.
    We may also write a general formula for the densities (\ref{densities}) in the compact way
    \begin{equation*}
        W_G(x) = 1 + \gamma \, \frac{\sin 2 \pi x}{2 \pi x} + \eta \, {\pmb \delta}_0(x),
    \end{equation*}
    with $\gamma \in \{-1,0,1\}$ and $\eta \in \{0, 1/2, 1 \}$. 

    Fixing any non-zero $f\in \H_{\pi \Delta}$ such that $zf(z) \in \H_{\pi \Delta}$, we thus have
    for all $\alpha$
    \begin{align*}
          \mathbb A (G, \pi \Delta, \alpha) \leq \frac{\int_ \R |x f(x)|^2 \, \d x + \gamma \int_ \R |x f(x)|^2  \, \frac{\sin 2 \pi (x + \alpha)}{2 \pi (x + \alpha)}\, \d x + \eta \, |\alpha \, f(-\alpha)|^2}{
             \int_ \R | f(x)|^2 \, \d x + \gamma \int_ \R | f(x)|^2 \, \frac{\sin 2 \pi (x + \alpha)}{2 \pi (x + \alpha)}\, \d x + \eta \, |f(-\alpha)|^2}.
    \end{align*}
    By the Riemann-Lebesgue lemma, $x f(x) \to 0$ as $|x| \to \infty$ and, by the dominated convergence theorem,
    \begin{equation*}
        \lim_{\alpha \to \infty}\int_ \R |x f(x)|^2 \, \frac{\sin 2 \pi (x + \alpha)}{2 \pi (x + \alpha)}\, \d x = \lim_{\alpha \to \infty}\int_ \R | f(x)|^2 \, \frac{\sin 2 \pi (x + \alpha)}{2 \pi (x + \alpha)}\, \d x = 0.
    \end{equation*}
    Hence,
    \begin{equation}\label{limsupAlpha}
        \limsup_{\alpha \to \infty} \mathbb A (G, \pi \Delta, \alpha) \leq \frac{\int_ \R |x f(x)|^2 \,  \d x}{\int_\R|f(x)|^2\, \d x}
    \end{equation}
    for all non-zero $f \in \H_{\pi \Delta}$. 

    We note that 
    \begin{equation}\label{liminfNonZero}
        \liminf_{\alpha \to \infty} \mathbb A(G, \pi \Delta, \alpha)> 0,
    \end{equation}
    as a consequence of the equivalence of norms
    \begin{equation*}
        c_1 \| F \|^2_{\H_{G, \pi \Delta}} \leq \| F \|^2_{\H_{\rm{U}, \pi \Delta}} \leq c_2 \| F \|^2_{\H_{G, \pi \Delta}}
    \end{equation*}
    and the fact that $\mathbb A (\mathrm U, \pi \Delta, \alpha) \equiv \frac{1}{4\Delta^2}$. For a sequence $\alpha_n \to \infty$ such that
    \begin{equation*}
         \liminf_{\alpha \to \infty} \mathbb A(G, \pi \Delta, \alpha) = \lim_{n \to \infty}  \mathbb A(G, \pi \Delta, \alpha_n),
    \end{equation*}
     we can associate a sequence of extremizers  for $\mathbb A(G, \pi \Delta, \alpha_n)$ denoted by $f_n \in \H_{\pi \Delta}$. We postpone the proof of the existence of such extremizers to the next section (see Proposition \ref{QualP}), but in fact this proof works just as well taking near-extremizers instead. We further  normalize these functions to have 
     \begin{equation}\label{NormalizationAlpha}
     \int_ \R |x f_{n}(x)|^2 \, \d x + \gamma \int_ \R |x f_{n}(x)|^2  \, \frac{\sin 2 \pi (x + \alpha_n)}{2 \pi (x + \alpha_n)}\, \d x + \eta \, |\alpha_n \,f_{n}(-\alpha_n)|^2 = 1.
     \end{equation}
     By the uncertainty principle of Nazarov\footnote{The uncertainty principle of Nazarov states that there exists a universal constant $C$ such that if $A,B \subseteq \R$ are sets of finite Lebesgue measure and $F \in L^2(\R)$, then $\int_\R |F(x)|^2 \, \d x \leq Ce^{C|A||B|} \left( \int_{\R \setminus A} |F(x)|^2 \, \d x + \int_{\R \setminus B} |\widehat F(y)|^2 \, \d y\right)$.} \cite{N}, $\{z f_{n}\}$ is a uniformly bounded sequence in the Hilbert space $\H_{\rm U, \pi \Delta}$, since it implies
     \begin{align}
         \nonumber \int_\R |x f_n (x)|^2 \d x &\ll  \int_{|x + \alpha_n| \geq 1} |x f_n (x)|^2 \d x \\
         \label{UnifBound}&\ll \int_{|x + \alpha_n| \geq 1} |x f_n (x)|^2  \bigg\{ 1 + \gamma \, \frac{\sin 2 \pi (x + \alpha_n)}{2 \pi (x+ \alpha_n)} \bigg\}  \d x \leq 1, 
     \end{align}
     where the implicit constants are uniform on $n$. Moreover, having in mind (\ref{liminfNonZero}) and
     \begin{equation*}
        \int_ \R | f_{n}(x)|^2  \, \d x + \gamma \int_ \R | f_{n}(x)|^2  \, \frac{\sin 2 \pi (x + \alpha_n)}{2 \pi (x + \alpha_n)}\, \d x + \eta \, |\,f_{n}(-\alpha_n)|^2 = \frac{1}{\mathbb A(G, \pi \Delta, \alpha_n)},
     \end{equation*}
     we obtain that  $\{f_{n}\}$ is uniformly bounded by the same reasoning. Thus, we have the weak convergences in $\H_{ \rm U, \pi \Delta}$
     \begin{equation*}
         \begin{cases}
             f_{n} \rightharpoonup f\\
            z f_{n} \rightharpoonup g
         \end{cases} \text{ as } n \to \infty,
     \end{equation*}
     which are also pointwise convergences because evaluation functionals are continuous in this space. In particular, it must be the case that $g(z) =z f(z)$.
     
     Fix $R > 0$ and $0 < r < \limsup_{\alpha \to \infty} \mathbb A(G, \pi \Delta, \alpha)^{-1}$. For $n$ big enough,
    \begin{align}
        \nonumber&r \leq \int_\R |f_{n}(x)|^2 \bigg\{  1 + \gamma \, \frac{\sin 2 \pi (x + \alpha_n)}{2 \pi (x+ \alpha_n)} + \eta \, {\pmb \delta}_0(x+\alpha_n) \bigg\} \d x \\
        \nonumber&\leq \int_{-R}^R|f_{n}(x)|^2 \bigg\{  1 + \gamma \, \frac{\sin 2 \pi (x + \alpha_n)}{2 \pi (x+ \alpha_n)} \bigg\}\d x +  \int_{|x| > R} |f_{n}(x)|^2 \bigg\{  1 + \gamma \, \frac{\sin 2 \pi (x + \alpha_n)}{2 \pi (x+ \alpha_n)} \bigg\}\d x +  |f_{n}(-\alpha_n)|^2 \\
        &\label{MassNotEsc}\leq \int_{-R}^R |f_{n}(x)|^2 \bigg\{  1 + \gamma \, \frac{\sin 2 \pi (x + \alpha_n)}{2 \pi (x+ \alpha_n)} \bigg\}\d x  + \frac{C}{R} + |f_n(-\alpha_n)|^2,
    \end{align}
    where from the second to the last line we used (\ref{UnifBound}). But  by the reproducing kernel property of $\H_{\rm U, \pi \Delta}$ and Cauchy-Schwarz,
    \begin{equation*}
        |x f_{n}(x)| = \left| \int_\R t f_{n}(t) \frac{\sin \pi \Delta (t-x)}{\pi(t-x)}\d t \right| \leq \left( \int_\R |t  f_{n}(t)|^2 \, \d t  \right)^{1/2} \left( \int_\R \left|\frac{\sin \pi \Delta (t-x)}{\pi(t-x)}\right|^2 \, \d t  \right)^{1/2} \ll_{\Delta} 1
    \end{equation*}
    for all $x \in \R$, and so, for $|x| \geq 1$,
    \begin{equation*}
        | f_{n}(x)| \leq \frac{1}{|x|}|x  f_{n}(x)|\ll \frac{1}{|x|},
    \end{equation*}
    meaning $| f_{n}(-\alpha_n)|^2 = O\left(\frac{1}{\alpha_n^2}\right)$ as $n \to \infty$. Sending $n \to \infty$ in (\ref{MassNotEsc}),
    \begin{equation*}
        r \leq  \int_{|x| \leq R}|f(x)|^2\, \d x  + \frac{C}{R}.
    \end{equation*}
    Now send $r \to \limsup_{\alpha \to \infty}\mathbb A(G, \pi \Delta, \alpha)^{-1}$ and $R \to \infty$ and rearrange to obtain
    \begin{equation*}
        \liminf_{\alpha \to \infty} \mathbb A (G, \pi \Delta, \alpha) \geq \frac{1}{\int_\R|f(x)|^2\, \d x}.
    \end{equation*}
    Thus, if we prove $\int_ \R |x f(x)|^2 \,  \d x \leq 1$, we are done, since we also have (\ref{limsupAlpha}). But indeed, by Fatou's lemma and (\ref{NormalizationAlpha}),
    \begin{equation*}
        \int_ \R |x f(x)|^2 \,  \d x \leq \liminf_{n \to \infty} \int_\R |x f_{n}(x)|^2 \bigg\{  1 + \gamma \, \frac{\sin 2 \pi (x + \alpha_n)}{2 \pi (x+ \alpha_n)} \bigg\}  \d x \leq 1.
    \end{equation*}

\section{The Extremal Problem in de Branges Spaces}\label{dBSec}

We need a few preliminary results concerning the Extremal Problem defined in (\ref{EP}) before embarking on the proof of Theorem \ref{EPThm}.

\begin{proposition}\label{QualP}
    Let $E$ be a Hermite-Biehler function and suppose $\X_k  = \{ f \in \H (E) : z^k f \in \H (E) \}\neq \{0\}$. The following statements hold:
    \begin{enumerate}[(i)]
        \item $(\mathbb{EP})(E,k,\alpha) > 0$.
        \item There exist extremizers for $(\mathbb{EP})(E,k,\alpha)$.
    \end{enumerate}
\end{proposition}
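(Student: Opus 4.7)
I would prove (i) and (ii) together by a compactness argument applied to a minimizing sequence. Set $L^2 := (\mathbb{EP})(E,k,\alpha) < \infty$ (finite because $\X_k \neq \{0\}$) and choose $f_n \in \X_k$ with $\|f_n\|_{\H(E)} = 1$ and $\|(z-\alpha)^k f_n\|_{\H(E)} \to L$.

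The first step is to extract a weak limit. Since $\H(E)$ is a reproducing kernel Hilbert space of entire functions, the pointwise estimate $|f_n(z)|^2 \leq K(z,z)\|f_n\|_{\H(E)}^2$ shows $\{f_n\}$ is uniformly bounded on compact subsets of $\C$; Montel's theorem and reflexivity of $\H(E)$ (after passing to a subsequence) give $f_n \to f$ locally uniformly on $\C$ and $f_n \rightharpoonup f$ weakly in $\H(E)$, so $f \in \H(E)$ with $\|f\|_{\H(E)} \leq 1$. Applying the same reasoning to the norm-bounded sequence $\{(z-\alpha)^k f_n\}$ and identifying its pointwise and weak limits via the RKHS property, I obtain $(z-\alpha)^k f \in \H(E)$ with $\|(z-\alpha)^k f\|_{\H(E)} \leq L$, so $f \in \X_k$.

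The key step, and the one I expect to be the main obstacle, is to rule out $f \equiv 0$ (weak convergence alone does not preserve the norm). The weight $(x-\alpha)^{2k}$ in the numerator of the Rayleigh quotient forces tightness on the real line: for $R > 0$,
\begin{equation*}
\int_{|x-\alpha|>R}\left|\frac{f_n(x)}{E(x)}\right|^2 \d x \;\leq\; R^{-2k}\,\|(z-\alpha)^k f_n\|_{\H(E)}^2,
\end{equation*}
so taking $R$ large yields $\int_{|x-\alpha|\leq R}|f_n/E|^2\,\d x \geq 1/2$ uniformly in $n$. On the compact set $\{|x-\alpha|\leq R\}$, the RKHS bound $|f_n(x)|^2 \leq K(x,x)$ combined with the standard identity $K(x,x)/|E(x)|^2 = \varphi'(x)/\pi$, where $\varphi$ denotes the continuous phase function of $E$, supplies a locally integrable dominant. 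Dominated convergence (using $f_n \to f$ pointwise from locally uniform convergence) then transfers this lower bound to $f$, giving $f \not\equiv 0$.

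Both assertions follow. For (i), if $L = 0$ then $\|(z-\alpha)^k f\|_{\H(E)} = 0$ forces $(z-\alpha)^k f \equiv 0$ and hence $f \equiv 0$, contradicting the previous step; thus $L > 0$. For (ii), the definition of the infimum gives $\|(z-\alpha)^k f\|_{\H(E)}^2 \geq L^2 \|f\|_{\H(E)}^2$, while the bounds $\|(z-\alpha)^k f\|_{\H(E)} \leq L$ and $\|f\|_{\H(E)} \leq 1$ already obtained combine to force $\|f\|_{\H(E)} = 1$ and $\|(z-\alpha)^k f\|_{\H(E)} = L$, so $f$ is an extremizer.
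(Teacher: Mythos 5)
Your proof is correct and rests on the same ingredients as the paper's, organized a bit differently. The paper proves (i) first by a direct argument: using local integrability of $x \mapsto K(x,x)|E(x)|^{-2}$, it picks a small interval $(\alpha-\eta, \alpha+\eta)$ on which $\int K(x,x)|E(x)|^{-2}\,\d x \le 1/2$, applies Cauchy--Schwarz via the reproducing kernel to bound the mass of $f$ there by $\|f\|^2/2$, and then the bound $|x-\alpha|^{2k}\ge\eta^{2k}$ on the complement yields $(\mathbb{EP})\ge\eta^{2k}/2$. Only then does it run the minimizing-sequence argument for (ii), normalizing by $\|(z-\alpha)^k f_n\|_{\mathcal H(E)}=1$; this normalization requires $(\mathbb{EP})>0$ already known (otherwise $\|f_n\|\to\infty$ and weak compactness fails), so the paper's (ii) genuinely depends on (i). You instead normalize $\|f_n\|_{\mathcal H(E)}=1$, which is bounded regardless, run weak compactness and the $R^{-2k}$ tail-decay/tightness argument to show the weak limit is non-trivial and extremizing, and then read off (i) as the observation that an extremizer with zero Rayleigh quotient would be identically zero. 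This is a modest unification: you get both conclusions from a single compactness argument and avoid the logical dependency. The underlying mechanism --- weak limit plus loss of mass controlled by the weight $|x-\alpha|^{2k}$, dominated by the locally integrable function $K(x,x)|E(x)|^{-2}$ --- is the same.

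One small caveat: you invoke the phase-function identity $K(x,x)/|E(x)|^2 = \varphi'(x)/\pi$, but $\varphi$ is defined only when $E$ has no real zeros, i.e.\ under (C1), which this proposition does not assume. What you actually need is just that $K(x,x)/|E(x)|^2$ is continuous, hence locally integrable; this holds for any Hermite--Biehler $E$ because real zeros of $|E(x)|^2$ are matched (with at least equal multiplicity) by zeros of $K(x,x)$. The paper makes this same point in a remark following its proof. Replacing the phase-function citation with this observation removes the only imprecision.
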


\begin{proof}
    Essentially as found in \cite{CMCP}, we include it here for the sake of completion. 

     First observe that $f \in \X_k$ if and only if $(z - \alpha)^k f \in \H(E)$. This follows because multiplication by $(z - \alpha)^k$ or $z^k$ does not change the mean type of a function and the integrability condition is in essence the same. Indeed, outside a compact set the function $|x - \alpha|^{2k}$ is of the order of $|x|^{2k}$, and vice versa.
    
    Note now that the function $x \mapsto K(x,x)|E(x)|^{-2}$ is continuous, and, in particular, locally integrable. We can thus find $0 < \eta < 1$ for which
    \begin{equation*}
        \int_{-\eta + \alpha}^{\eta+ \alpha}\frac{K(x,x)}{|E(x)|^{2}} \, \d x \leq \frac{1}{2}.
    \end{equation*}
    Thus, from the reproducing kernel property and Cauchy-Schwarz
    \begin{align*}
         \int_{-\eta + \alpha}^{\eta+ \alpha} \left|\frac{f(x )}{E(x)}\right|^2 \, \d x =  \int_{-\eta + \alpha}^{\eta+ \alpha} \left|\frac{\langle f, K(x, \cdot)\rangle_{\H(E)}}{E(x)}\right|^2 \, \d x \leq \| f\|_{\H(E)}^2 \int_{-\eta + \alpha}^{\eta+ \alpha}\frac{K(x,x)}{|E(x)|^{2}} \, \d x \leq \frac{\| f\|_{\H(E)}^2}{2}.
    \end{align*}
    So that 
    \begin{align*}
          \int_{\R} \left|\frac{(x - \alpha)^k f(x )}{E(x)}\right|^2 \, \d x \geq  \int_{|x - \alpha| \geq \eta} \left|\frac{(x - \alpha)^k f(x )}{E(x)}\right|^2 \, \d x  
          \geq \eta^{2k} \int_{|x - \alpha| \geq \eta} \left|\frac{ f(x )}{E(x)}\right|^2 \, \d x \geq \eta^{2k} \frac{\| f\|_{\H(E)}^2}{2},
    \end{align*}
    which implies $ (\mathbb{EP})(E,k,\alpha) \geq \frac{\eta^{2k}}{2}  > 0$.

    To prove (ii), take a minimizing sequence $\{ f_n\} \subseteq \H(E)$ normalized by $\|(z - \alpha)^k f_n\|_{\H (E)}^2  = 1$, implying
    \begin{equation*}
        \lim_{n \to \infty} \|f_n\|^2_{\H(E)} =  (\mathbb{EP})(E,k,\alpha)^{-1}.
    \end{equation*}
    This is a bounded sequence in $\H(E)$, so by the Banach--Alaoglu theorem it converges weakly to a $g \in \H(E)$. Since evaluation functionals are continuous, $f_n(z) \to g(z)$ for all $z \in \C$. So that applying Fatou's lemma
    \begin{equation*}
        \int_{\R}  \bigg|\frac{(x - \alpha)^k g(x )}{E(x)}\bigg|^2 \, \d x \leq \lim_{n \to \infty} \int_{\R}  \bigg|\frac{(x - \alpha)^k f_n(x )}{E(x)}\bigg|^2 \, \d x = 1,
    \end{equation*}
    verifying $g \in \X_k.$
    
    If $0 < r < (E,k,\alpha)^{-1}$, then for $n$ big enough and for $R > 0$,
    \begin{align*}
        r &\leq \int_{\R} \bigg|\frac{f_n(x )}{E(x)}\bigg|^2 \, \d x \\
        &= \int_{-R + \alpha}^{R + \alpha} \bigg|\frac{f_n(x )}{E(x)}\bigg|^2 \, \d x + \int_{|x - \alpha| \geq R}  \bigg|\frac{f_n(x )}{E(x)}\bigg|^2 \, \d x \\
        &\leq  \int_{-R + \alpha}^{R + \alpha} \bigg|\frac{f_n(x )}{E(x)}\bigg|^2 \, \d x + \frac{1}{R^{2k}} \int_{|x - \alpha| \geq R}  \bigg|\frac{(x - \alpha)^k f_n(x )}{E(x)}\bigg|^2 \, \d x\\
        &\leq \int_{-R + \alpha}^{R + \alpha} \bigg|\frac{f_n(x )}{E(x)}\bigg|^2 \, \d x + \frac{1}{R^{2k}},
    \end{align*}
    by the normalization $\|(z - \alpha)^k f_n\|^2_{\H(E)} = 1$. Sending $n \to \infty$ and applying the dominated convergence theorem,
    \begin{equation*}
        r \leq \int_{-R + \alpha}^{R + \alpha} \bigg|\frac{g(x )}{E(x)}\bigg|^2 \, \d x + \frac{1}{R^{2k}}.
    \end{equation*}
    Now we can send $R \to \infty$ and $r \to (E,k,\alpha)^{-1}$ to conclude $g$ is the desired extremizer.
\end{proof}
\noindent {\sc Remark.} The proof above is valid without assuming any of three conditions (C1)--(C3) on the Hermite-Biehler function $E$. Hypotheses (C2) and (C3) are clearly not being used, but even (C1) may be discarded, because possible real zeros of $|E(x)|^2$ are cancelled out by those of $K(x,x)$, so that $x \mapsto K(x,x)|E(x)|^{-2}$ is in any case continuous.

\smallskip

 We now establish an equivalence between the extremal problem in $\H(E)$ to a corresponding one in an appropriate space of sequences.

\begin{lemma}\label{SeqEquiv}
    Let $E$ be a Hermite-Biehler function satisfying (C1) and (C3) such that $\X_k \neq \{0\}$. Set $c_n := -A'(\xi_n)/B(\xi_n)$ and let $X$ be the space of real-valued sequences $\{a_n\}_{\Z^*}$ such that
    \begin{enumerate}[(i)]
        \item $\displaystyle \sum_{\substack{{n = -\infty}\\{n \neq 0}}}^\infty c_n a_n^2 \xi_n^{2k} < \infty$;
        \item  $\displaystyle \sum_{\substack{{n = -\infty}\\{n \neq 0}}}^\infty a_n (\xi_n - \alpha)^{k-\ell} = 0 \,\, \text{ for } \,\, 1 \leq \ell \leq k$.
    \end{enumerate}
    Then
    \begin{equation}\label{EPSeq}
        (\mathbb{EP})(E,k,\alpha) = \inf_{\{a_n\} \in X \setminus \{0\} }  \frac{\textstyle \sum_{n \in \Z^*} c_n a_n^2 (\xi_n - \alpha)^{2k}}{\textstyle \sum_{n \in \Z^*} c_n a_n^2 }.
    \end{equation}
\end{lemma}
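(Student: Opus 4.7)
The plan is to identify the extremal problem over $\X_k$ with the sequence-space problem on the right of (\ref{EPSeq}) via the coordinates $a_n := f(\xi_n)/A'(\xi_n)$. Under (C1) and (C3), the interpolation formula (\ref{Int}) and Parseval's identity (\ref{Pars}) are available. Substituting $K(\xi_n,\xi_n) = -A'(\xi_n)B(\xi_n)/\pi$ into (\ref{Pars}) yields $\|f\|_{\H(E)}^2 = \pi\sum_n c_n a_n^2$. When $f \in \X_k$, the function $(z-\alpha)^k f$ is also in $\H(E)$ with values $(\xi_n-\alpha)^k A'(\xi_n)\, a_n$ at $\xi_n$, so Parseval applied to $(z-\alpha)^k f$ gives $\|(z-\alpha)^k f\|_{\H(E)}^2 = \pi\sum_n c_n a_n^2 (\xi_n-\alpha)^{2k}$. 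The ratio therefore matches that in (\ref{EPSeq}), and the claim reduces to showing the coordinate map $f\mapsto\{a_n\}$ is a bijection between $\X_k$ and $X$.

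For the forward direction, condition (i) is immediate from the Parseval computation above. For condition (ii), I would expand $(z-\alpha)^k f(z)$ in two ways: as $(z-\alpha)^k$ times the interpolation expansion $f(z)=A(z)\sum_n a_n/(z-\xi_n)$, and as the interpolation expansion of $(z-\alpha)^k f$, which by the previous paragraph has coefficients $a_n(\xi_n-\alpha)^k$. Using the algebraic factorization
\begin{equation*}
  (z-\alpha)^k-(\xi_n-\alpha)^k = (z-\xi_n)\sum_{j=0}^{k-1}(z-\alpha)^{k-1-j}(\xi_n-\alpha)^j,
\end{equation*}
the difference between the two expressions collapses to $A(z)\sum_{j=0}^{k-1}(z-\alpha)^{k-1-j}\sum_n a_n(\xi_n-\alpha)^j$. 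Since both expressions represent the same function, this difference is identically zero; as $A\not\equiv 0$, each coefficient $\sum_n a_n(\xi_n-\alpha)^j$ must vanish for $0\leq j\leq k-1$, which (with $\ell=k-j$) is exactly (ii).

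For the reverse direction, given $\{a_n\}\in X$, condition (i) forces $\sum_n c_n a_n^2 < \infty$ (since $(\xi_n-\alpha)^{2k}\geq 1$ for $|n|$ large and there are only finitely many small-index terms). Parseval then produces $f\in\H(E)$ with $f(\xi_n)=A'(\xi_n)\, a_n$, and similarly a $g\in\H(E)$ with $g(\xi_n)=(\xi_n-\alpha)^k A'(\xi_n)\, a_n$ and $\|g\|^2=\pi\sum_n c_n a_n^2(\xi_n-\alpha)^{2k}$. Running the same two-way expansion on these representatives, the polynomial identity combined with condition (ii) gives
\begin{equation*}
  (z-\alpha)^k f(z)-g(z) = A(z)\sum_{j=0}^{k-1}(z-\alpha)^{k-1-j}\sum_n a_n(\xi_n-\alpha)^j = 0.
\end{equation*}
Hence $(z-\alpha)^k f=g\in\H(E)$, so $f\in\X_k$. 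The two coordinate maps are mutual inverses, so the two infima coincide.

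The main technical obstacle is the manipulation of the series in the reverse direction: to justify expanding $f$ and $g$ in partial-fraction form and rearranging, one uses local uniform convergence of the interpolation series (which follows from norm convergence in $\H(E)$ together with the continuity of the evaluation functionals), plus a Cauchy--Schwarz estimate showing that each partial sum $\sum_n a_n(\xi_n-\alpha)^j$ for $0\leq j\leq k-1$ is dominated by $(\sum c_n a_n^2(\xi_n-\alpha)^{2k})^{1/2}(\sum (\xi_n-\alpha)^{2j-2k}/c_n)^{1/2}$, where the second factor is controlled by the typical growth of $c_n$ and the zero-counting function associated with $A$. A minor subtlety is the potentially coincident case $\alpha=\xi_m$, handled by interpreting the $n=m$ term as a limit, which presents no real obstruction.
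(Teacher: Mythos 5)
Your overall strategy matches the paper's: set up the coordinates $a_n = f(\xi_n)/A'(\xi_n)$, use Parseval to translate the two norms into the series $\pi\sum c_n a_n^2$ and $\pi\sum c_n a_n^2(\xi_n-\alpha)^{2k}$, and show that the constraints (ii) encode membership in $\X_k$. Your derivation of the linear constraints is actually cleaner than the paper's: by comparing the partial-fraction expansion of $(z-\alpha)^k f$ with $(z-\alpha)^k$ times the expansion of $f$, and invoking the factorization $(z-\alpha)^k - (\xi_n-\alpha)^k = (z-\xi_n)\sum_{j}(z-\alpha)^{k-1-j}(\xi_n-\alpha)^j$, you extract all $k$ constraints at once as coefficients of a vanishing polynomial, whereas the paper evaluates $g_\ell(\alpha)=0$ via the interpolation formula and must run a separate differentiated argument when $\alpha$ happens to be a zero of $A$. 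This is a genuine simplification, and your Cauchy--Schwarz control of $\sum_n a_n(\xi_n-\alpha)^j$ for $0\le j\le k-1$ (using $\sum 1/(c_n\xi_n^2)<\infty$, which comes from $K(0,\cdot)\in\H(E)$) is the right justification for the rearrangement.

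There is, however, one real gap. You claim the coordinate map is ``a bijection between $\X_k$ and $X$,'' but $X$ consists of \emph{real-valued} sequences while $\X_k$ contains arbitrary complex entire functions (e.g.\ $if_0$ for $f_0$ real entire). For complex $f$ the numbers $a_n$ are genuinely complex, Parseval gives $\|f\|^2 = \pi\sum c_n|a_n|^2$, not $\pi\sum c_n a_n^2$, and the image of $\X_k$ under your map is not contained in $X$. So the bijection you invoke does not exist as stated. The missing step is a reduction: either observe (as the paper does) that extremizers in (\ref{EP}) can be taken real entire by writing $f = (f+f^*)/2 - i\,\bigl(i(f-f^*)/2\bigr)$ and noting the Rayleigh quotient is a weighted average of the quotients of these two real-entire pieces; or run the whole argument with complex sequences and $|a_n|^2$ and then deduce by the same decomposition on sequences that the infimum is unchanged when restricting to real sequences. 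Either fix is short, but without one the ``bijection'' framing is incorrect and the two infima are not yet identified.
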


\begin{proof}
    We work with real-valued sequences because an extremizer for $(\mathbb{EP})(E,k,\alpha)$ can be taken real entire. Indeed, we know by Proposition \ref{QualP} that an $f \in \X_k$ attaining $(\mathbb{EP})(E,k,\alpha)$ exists, but we can further decompose $f(z) = g(z) - i h(z)$, where $g, h \in \X_k$ are real entire functions, by setting $g = (f + f^*)/2$ and $h = i(f - f^*)/2$ . One then has
    \begin{equation*}
        \frac{\|(z - \alpha)^k f\|_{\H (E)}^2}{\|f\|_{\H (E)}^2} = \frac{\|(z - \alpha)^k g\|_{\H (E)}^2 + \|(z - \alpha)^k h\|_{\H (E)}^2}{\|g\|_{\H (E)}^2 + \|h\|_{\H (E)}^2},
    \end{equation*}
    which, from elementary considerations, implies $g$ and $h$ must also be extremizers.
    
    Given a non-identically zero $f \in \X_k$ that is real entire, let us set $a_n := f(\xi_n)/A'(\xi_n)$. By (C1) and (C3), we may use the interpolation formula and Parseval's identity in $\H(E)$ given respectively by (\ref{Int}) and (\ref{Pars}). In particular, by (\ref{Pars}) we have
    \begin{equation*}
        \| f\|_{\H(E)}^2 =  \pi \sum_{\substack{{n = -\infty}\\{n \neq 0}}}^\infty c_n a_n^2 \quad \text{and } \quad \| (z - \alpha)^k f\|_{\H(E)}^2 =  \pi \sum_{\substack{{n = -\infty}\\{n \neq 0}}}^\infty c_n a_n^2 (\xi_n - \alpha)^{2k}.
    \end{equation*}
    The fact that the norm $\| (z - \alpha)^k f\|_{\H(E)}^2$ is finite implies the sequence $\{a_n\}$ satisfies condition (i).
    And indeed we have
    \begin{equation}\label{EqSeqNorm}
        \frac{\| (z - \alpha)^k f\|_{\H(E)}^2}{\| f\|_{\H(E)}^2} =\frac{\textstyle \sum_{n \in \Z^*} c_n a_n^2 (\xi_n - \alpha)^{2k}}{\textstyle \sum_{n \in \Z^*} c_n a_n^2 }.
    \end{equation}
    Further, we have that $f \in \X_k$ if and only if $g_\ell (z) := (z-\alpha)^{k - \ell + 1} f(z) \in \H(E)$ for all $1 \leq \ell \leq k$, which gives the  constraints
    \begin{equation*}
        g_1(\alpha)= \ldots = g_{k}(\alpha)=0,
    \end{equation*}
    and this implies item (ii). In fact,
    if $\alpha$ is not a zero of $A$, then by (\ref{Int})
    \begin{equation*}
        0 = g_\ell (\alpha) = \sum_{\substack{{n = -\infty}\\{n \neq 0}}}^\infty \frac{ g_\ell (\xi_n)}{K(\xi_n, \xi_n)} K(\xi_n,\alpha) = \sum_{\substack{{n = -\infty}\\{n \neq 0}}}^\infty \frac{ g_\ell (\xi_n)}{A'(\xi_n)} \frac{A(\alpha)}{\alpha - \xi_n}= -\sum_{\substack{{n = -\infty}\\{n \neq 0}}}^\infty \frac{f(\xi_n)}{A'(\xi_n)} (\xi_n - \alpha)^{k - \ell }A(\alpha),
    \end{equation*}
    and since $A(\alpha) \neq 0$, this is equivalent to 
    \begin{equation}\label{SeqConst}
        \sum_{\substack{{n = -\infty}\\{n \neq 0}}}^\infty a_n (\xi_n - \alpha)^{k - \ell} = 0
    \end{equation}
   for all $1 \leq \ell \leq k$. If $\alpha$ is a zero of $A$, then $A'(\alpha) \neq 0$ since by condition (C1)  the zeros of $A$ are simple. Moreover, for any $g \in \H(E)$ its representation (\ref{Int}) converges uniformly in compact sets, which allows us to differentiate inside the integral to obtain
    \begin{align*}
        g'(z) = \sum_{\substack{{n = -\infty}\\{n \neq 0}}}^\infty \frac{g(\xi_n)}{K(\xi_n, \xi_n)}\frac{\d}{\d z}K(\xi_n, z) =\sum_{\substack{{n = -\infty}\\{n \neq 0}}}^\infty \frac{g(\xi_n)}{A'(\xi_n)}\bigg\{  \frac{A'(z)(z - \xi_{n}) - A(z)}{(z - \xi_n)^2}   \bigg\}.
    \end{align*}    
    Applying the above formula to
    \begin{equation*}
        g'_\ell(z) = g_{\ell + 1}(z) + (z-\alpha)g'_{\ell + 1}(z)
    \end{equation*}
   evaluated at $\alpha$, we obtain
   \begin{align*}
       g_{\ell + 1}(\alpha) = g'_{\ell} (\alpha) = -\frac{g_\ell(\alpha)}{A'(\alpha)}\frac{A''(\alpha)}{2} + \sum_{\substack{{n = -\infty}\\{n \neq 0}\\{\xi_n \neq \alpha}}}^\infty \frac{g_\ell(\xi_n)}{A'(\xi_n)} \frac{A'(\alpha)}{(\alpha - \xi_n)} = -\sum_{\substack{{n = -\infty}\\{n \neq 0}\\{\xi_n \neq \alpha}}}^\infty \frac{f(\xi_n)}{A'(\xi_n)}(\xi_n - \alpha)^{k - \ell}A'(\alpha)
   \end{align*}
    for $1 \leq \ell \leq k$, which is again equivalent to (\ref{SeqConst}), establishing that $\{a_n \} \in X$.

    Conversely, if one has a non-zero sequence $\{a_n\} \in X$, the summability condition (i) allows us to construct  $g \in \H(E)$ using (\ref{Int}) by setting $g(\xi_n) = (\xi_n - \alpha)^k a_n A'(\xi_n)$. Condition (ii) in turn guarantees that $g$ has a zero of order at least $k$ at $\alpha$. So taking $f(z) = (z - \alpha)^{-k} g(z)$, this entire function will occur in $\X_k$, and (\ref{EqSeqNorm}) holds, concluding the lemma.
\end{proof}

\noindent {\sc Remark.} In establishing the above equivalence, we use the interpolation formula and Parseval's identity with respect to the zeros of the $A$ function. De Branges's result \cite[Theorem 22]{dB} actually gives these formulas with respect to the zeros of \emph{any} of the functions $e^{i \beta} E(z) - e^{- i\beta}E^*(z)$ when (C1) holds and $0 \leq \beta < \pi$ is such that $e^{i \beta} E(z) - e^{- i\beta}E^*(z) \notin \H(E)$. Of course, we specialize to the case $\beta = \pi/2$ here, but an analogous version of the above lemma may be obtained for other values of $\beta$ when the condition corresponding to (C3) holds. Of particular note is the case $\beta = 0$, which implies we may do the same things for the zeros of the $B$ function when $B \notin \H(E)$.
    
\begin{proof}[Proof of Theorem \ref{EPThm}]
    Since the hypotheses also apply here, we can invoke Lemma \ref{SeqEquiv}. We essentially solve the right-hand side of (\ref{EPSeq}), except in a particular case we will single out below. The coefficients $\{c_n\}$ and the space $X$ are the same as before.

    Now let $I := \{i_1 < i_2 < \ldots < i_k\}$ be a set of indices such that $\xi_{i_1} < \xi_{i_2}< \ldots < \xi_{i_k}$ are a choice of the $k$-th closest zeros to $\alpha$. That is, for all $i \in I$ and $n \notin I$, $|\xi_n - \alpha| \geq |\xi_{i} - \alpha|$. If $\{a_n\} \in X$, given the constraints (\ref{SeqConst}), we may write the following system of $k$ equations
    \begin{eqnarray*}
        &a_{i_1} (\xi_{i_1} - \alpha)^{k-1} + \ldots + a_{i_k} (\xi_{i_k} - \alpha)^{k-1}= - \sum_{n \notin I} a_n (\xi_n - \alpha)^{k-1},\\
        &a_{i_1} (\xi_{i_1} - \alpha)^{k-2} + \ldots + a_{i_k} (\xi_{i_k} - \alpha)^{k-2} = - \sum_{n \notin I} a_n (\xi_n - \alpha)^{k-2},\\
        &\ldots\\
        &a_{i_1}  + \ldots + a_{i_k} = - \sum_{n \notin I} a_n ,
    \end{eqnarray*}
    which in matrix notation becomes
    \begin{equation*}\label{MatrixEq}
        (a_{i_1} , \ldots , a_{i_k})\,\mc T = (-S_1, \ldots, -S_k),
    \end{equation*}
    where $\mc T$ is the matrix given by $\mc T_{mj} = (\xi_{i_m} - \alpha)^{k-j}$ for $1\leq m,j \leq k$ and $S_j = \sum_{n \notin I} a_n (\xi_n - \alpha)^{k-j}$ for $1\leq j \leq k$ \footnote{Here we adopt the convention $0^0 = 1$ whenever it may occur.}. In particular,
    \begin{equation}\label{RelSa}
        a_{i_j} = - \sum_{r=1}^k S_r (\mc T^{-1})_{rj} \  \ \text{ for } \ \ 1 \leq j \leq k.
    \end{equation}

    If $k \geq 2$, notice that for one $i^* \in I$, we have the strict inequality $|\xi_{i^*} - \alpha|<|\xi_{\ell} - \alpha|$ for all $\ell \notin I$, since there are at most two $\xi_i$ such that $|\xi_i - \alpha| = \min_{A(\xi) =0} |\xi - \alpha|$. If $k = 1$, suppose for now this minimum is attained by exactly one of $\xi_{i_1}$ and $\xi_{i_2}$ , ($\alpha$ can be the midpoint of two zeros of $A$, but we deal with this case later,) which we call $\xi_{i^*}$ so that we also have $|\xi_{i^*} - \alpha|<|\xi_{\ell} - \alpha|$ for all $\ell \notin I$. 
    
    Now for each $\ell \notin I$ construct the following sequence in $X$:
   \begin{equation*}
        a_n =\begin{cases}
        1, & {\rm if} \ \ n = \ell;\\
        0\,, &{\rm if} \ \ n \notin I \cup \{\ell\};\\
         - \sum_{r=1}^k S_r (\mc T^{-1})_{rj},&{\rm if} \ \ n=i_j \in I.
        \end{cases}
   \end{equation*}
   implying that, by the remark in the previous paragraph and the definition of $I$,
   \begin{align}\label{StrictIneq}
       \lambda_0^{2k} := (\mathbb{EP})(E,k,\alpha) \leq \frac{\textstyle \sum_{n \in I \cup \{\ell\}} c_n a_n^2 (\xi_n - \alpha)^{2k}}{\textstyle \sum_{n \in I \cup \{\ell\}} c_n a_n^2 } < \frac{\textstyle \sum_{n \in I \cup \{\ell\}} c_n a_n^2 (\xi_\ell - \alpha)^{2k}}{\textstyle \sum_{n \in I \cup \{\ell\}} c_n a_n^2 } = (\xi_\ell - \alpha)^{2k}.
   \end{align}

   Now define the functionals $F$ and $G$ on the space of sequences $\{ a_n\}_{n \notin I}$ satisfying $ \sum_{n \notin I} c_n a_n^2 \xi_n^{2k} < \infty$ by
   \begin{equation*}
       F(\{ a_n\}) = \sum_{j = 1}^{k}c_{i_j} \bigg( \sum_{r=1}^k S_r (\mc T^{-1})_{rj}\bigg)^2(\xi_{i_j} - \alpha)^{2k} + \sum_{n \notin I} c_n a_n^2 (\xi_n - \alpha)^{2k} 
   \end{equation*}
   and
   \begin{equation*}
       G(\{ a_n\})=  \sum_{j = 1}^{k}c_{i_j} \bigg( \sum_{r=1}^k S_r (\mc T^{-1})_{rj}\bigg)^2 + \sum_{n \notin I} c_n a_n^2 ,
   \end{equation*}
   where again $S_j = \sum_{n \notin I} a_n (\xi_n - \alpha)^{k-j}$ for $1\leq j \leq k$.

   From Proposition \ref{QualP} and Lemma \ref{SeqEquiv}, we know there is $a^* = \{a_n^*\}_{n \notin I}$ extremizer to (\ref{EPSeq}). If we perturb this sequence and consider for small $\varepsilon > 0$ and for $m \notin I $ the function 
   \begin{equation*}
       \varphi_m(\eps) = \frac{ F(a^* + \eps e^m)}{G(a^* + \eps e^m)},
   \end{equation*}
   where $e^m$ is the sequence with entries 
   \begin{equation*}
       e^m_n = \begin{cases}
           1 \text{ if } m = n;\\
           0  \text{   otherwise},
       \end{cases}
   \end{equation*}
   then $\varphi_m$ is differentiable and from extremality we have $\varphi_m'(0) = 0$, so the sequence must satisfy the Lagrange multiplier equations
    \begin{equation*}
       \frac{\partial F}{\partial a_m} \left(a^* \right) = \lambda_0^{2k} \frac{\partial G}{\partial a_m} \left (a^* \right ) \, \text{for } m \notin I,
    \end{equation*}
    or, equivalenty, defining $(a^*_{i_1} , \ldots , a^*_{i_k})$ according to (\ref{RelSa}),
    \begin{equation}\label{SeqTermsPrev}
        c_m a_m^* \left((\xi_m - \alpha)^{2k} -\lambda_0^{2k}\right) = \sum_{j = 1}^{k}c_{i_j} a_{i_j}^* \bigg( \sum_{r=1}^k (\xi_m - \alpha)^{k-r} (\mc T^{-1})_{rj}\bigg)\left((\xi_{i_j}- \alpha)^{2k} -\lambda_0^{2k}\right) \, \text{for } m \notin I.
    \end{equation}
    By (\ref{StrictIneq}), we can divide both sides by $c_m \left((\xi_m - \alpha)^{2k} -\lambda_0^{2k}\right)$ to say
    \begin{equation}\label{SeqTerms}
        a_m^*= \frac{1}{c_m \left((\xi_m - \alpha)^{2k} -\lambda_0^{2k}\right)} \sum_{j = 1}^{k}c_{i_j} a_{i_j}^* \bigg( \sum_{r=1}^k (\xi_m - \alpha)^{k-r} (\mc T^{-1})_{rj}\bigg)\left((\xi_{i_j}- \alpha)^{2k} -\lambda_0^{2k}\right) \, \text{for } m \notin I,
    \end{equation}
    multiply by $(\xi_m - \alpha)^{k - \ell}$, with $1 \leq \ell \leq k$, and sum over $m \notin I$ to get
    \begin{equation*}
        S^*_\ell =\sum_{m \notin I} a_m^* (\xi_m - \alpha)^{k - \ell} = \sum_{j = 1}^{k}a^*_{i_j} \sum_{m \notin I} \frac{c_{i_j} \bigg( \sum_{r=1}^k (\xi - \alpha)_m^{k-r} (\mc T^{-1})_{rj}\bigg)\left((\xi_{i_j}- \alpha)^{2k} -\lambda_0^{2k}\right)}{c_m \left((\xi_m - \alpha)^{2k} -\lambda_0^{2k}\right)}(\xi_m - \alpha)^{k - \ell}.
    \end{equation*}

    Recalling that (4.\ref{MatrixEq}) tells us
    \begin{equation*}
        S^*_\ell = - \sum_{j = 1}^k a_{i_j}^* \mc T_{j \ell} = -  \sum_{j = 1}^k a_{i_j}^* (\xi_{i_j} - \alpha)^{k - \ell},
    \end{equation*}
    we can subtract the two expressions for $S^*_\ell$ to obtain the equality
    \begin{equation}\label{AllConstrFormula}
        \sum_{j = 1}^{k}a^*_{i_j} \left [ \sum_{m \notin I} \frac{c_{i_j} \bigg( \sum_{r=1}^k (\xi_m - \alpha)^{k-r} (\mc T^{-1})_{rj}\bigg)\left((\xi_{i_j}- \alpha)^{2k} -\lambda_0^{2k}\right)}{c_m \left((\xi_m - \alpha)^{2k} -\lambda_0^{2k}\right)}(\xi_m - \alpha)^{k - \ell} + (\xi_{i_j} - \alpha)^{k - \ell} \right ] = 0
    \end{equation}
    for each $1 \leq \ell \leq k$. This can be equivalently stated in terms of matrix notation as saying that $(a^*_{i_1} ,\ldots , a^*_{i_k}) \in \mathrm{ker} \, \mc W (\lambda_0)$ where, for a generic complex $\lambda$, the $k \times k$ matrix $\mc W (\lambda)$ is given by
    \begin{align}
        \label{defW} \big(\mc W (\lambda)\big)_{j \ell} &= \sum_{m \notin I} \frac{c_{i_j} \bigg( \sum_{r=1}^k (\xi_m - \alpha)^{k-r} (\mc T^{-1})_{rj}\bigg)\left((\xi_{i_j}- \alpha)^{2k} -\lambda^{2k}\right)}{c_m \left((\xi_m - \alpha)^{2k} -\lambda^{2k}\right)}(\xi_m - \alpha)^{k - \ell} + (\xi_{i_j} - \alpha)^{k - \ell}  \\
        \nonumber&= c_{i_j} \left((\xi_{i_j}- \alpha)^{2k} -\lambda^{2k}\right) \sum_{\substack{{m = -\infty}\\{m\neq 0}}}^\infty \frac{ \bigg( \sum_{r=1}^k (\xi_m - \alpha)^{k-r} (\mc T^{-1})_{rj}\bigg)}{c_m \left((\xi_m - \alpha)^{2k} -\lambda^{2k}\right)}(\xi_m - \alpha)^{k - \ell} \\
        \nonumber&= c_{i_j} \left((\xi_{i_j}- \alpha)^{2k} -\lambda^{2k}\right) \left( (\mc T^{-1})^\top \Q (\lambda) \right)_{j\ell},
    \end{align}
    where we have used that
    \begin{equation*}
        \sum_{r = 1}^{k} (\xi_{i_n} - \alpha)^{k - r} (\mc T^{-1})_{rj}= \begin{cases}
            1 \text{ if } n = j,  \\
            0 \text{ if } n \neq j
        \end{cases}
    \end{equation*}
   for $1 \leq n \leq k$, and $\Q (\lambda)$ is the $k \times k$ matrix with entries
    \begin{equation}\label{QMatr}
        \big(\mc Q (\lambda)\big)_{n \ell} = \sum_{\substack{{m = -\infty}\\{m\neq 0}}}^\infty \frac{ (\xi_m - \alpha)^{2k - n - \ell}}{c_m \left((\xi_m - \alpha)^{2k} -\lambda^{2k}\right)}
    \end{equation}
    for $1 \leq n, \ell \leq k $.

    If instead $0 < \lambda^* < \min_{\ell \notin I} |\xi_\ell - \alpha|$ satisfies $\det \mc W(\lambda^*) = 0$, pick $(a^*_{i_1} , \ldots , a^*_{i_k}) \in \mathrm{ker} \, \mc W (\lambda^*) \setminus \{\mathbf{0}\}$, and using the formula given by (\ref{SeqTerms}) we may build a sequence $\{a_n^*\}_{\Z^*} \in X$. In fact, the constraints (\ref{SeqConst}) come from the expression (\ref{AllConstrFormula}) and the summability $\sum c_n (a_n^*)^2 \xi_n^{2k} < \infty$ follows from (\ref{SeqTerms}) and the fact that $K(0,z) = A(0)B(z)/z$ implies $\sum \frac{1}{c_n \xi_n^2} < \infty$. Undoing the step from (\ref{SeqTerms}) to (\ref{SeqTermsPrev}) and summing over $m \notin I$, we get
    \begin{align*}
        \sum_{m \notin I} c_m (a_m^*)^2 \big((\xi_m - \alpha)^{2k} -& (\lambda^*)^{2k})\big) = \sum_{m \notin I} a_m^* \sum_{j = 1}^{k}c_{i_j} a_{i_j}^* \bigg( \sum_{r=1}^k (\xi_m - \alpha)^{k-r} (\mc T^{-1})_{rj}\bigg)\big((\xi_{i_j}- \alpha)^{2k} - (\lambda^*)^{2k}\big) \\
        &=  \sum_{j = 1}^{k}c_{i_j} a_{i_j}^* \bigg( \sum_{r=1}^k S_r^* (\mc T^{-1})_{rj}\bigg)\big((\xi_{i_j}- \alpha)^{2k} - (\lambda^*)^{2k}\big) \\
        &= - \sum_{j = 1}^{k}c_{i_j} (a_{i_j}^*)^2 \big((\xi_{i_j}- \alpha)^{2k} - (\lambda^*)^{2k}\big),
    \end{align*}
    where in the last equality we have used (\ref{RelSa}). Upon rearranging the terms, the equation above becomes
    \begin{equation*}
        \frac{\sum_{n \in \Z^*} c_n a_n^2 (\xi_n - \alpha)^{2k}}{\textstyle \sum_{n \in \Z^*} c_n a_n^2 } =  (\lambda^*)^{2k},
    \end{equation*}
    whence the conclusion follows that the desired $\lambda_0$ is the smallest such $\lambda^*$.
    
    The job now mostly consists in describing $\lambda_0$ as the zero of the simpler function appearing in the theorem statement. To do this we look at $\Q(\lambda)$, since
    \begin{align*}
        \det \mc W (\lambda) = \left( \prod_{i \in I}c_{i} \left((\xi_{i}- \alpha)^{2k} -\lambda^{2k}\right) \right) \left( \det \mc T^{-1}\right) \det \Q(\lambda),
    \end{align*}
    which we can further simplify by using the partial fraction decomposition  $\frac{2k x^{2k - s -1} y^n}{x^{2k} - y^{2k}} = \sum_{r = 0}^{2k - 1}\frac{\omega^{-rs}}{x - \omega^r y}$, as in \cite[Lemma 26]{CMCP}, where $\omega := e^{i \pi /k}$ and $0 \leq s \leq 2k -1$. Recall also the formula for $C(z)$
    \begin{equation}\label{Tangent}
        C(z) = \sum_{m = 1}^\infty \frac{2z}{c_m (\xi_m^2 -z^2)},
    \end{equation}
    which converges uniformly in compact subsets of $\C$ away from the zeros of $A$. Let us now look at the partial sums of (\ref{QMatr}). Since by (C2) the zeros of $A$ are symmetric, i.e. $\xi_{-m} = - \xi_m$, we have
    \begin{align*}
        \sum_{0 < |m| \leq M} &\frac{ (\xi_m - \alpha)^{2k - n - \ell}}{c_m \left((\xi_m - \alpha)^{2k} -\lambda^{2k}\right)} = \frac{1}{2k \lambda^{\ell+n-1}} \sum_{0 < |m| \leq M} \frac{2k \lambda^{\ell+n-1}(\xi_m - \alpha)^{2k - n - \ell}}{c_m\left((\xi_m - \alpha)^{2k} -\lambda^{2k}\right)}\\
        &= \frac{1}{2k \lambda^{\ell+n-1}} \sum_{0 < |m| \leq M} \sum_{r = 0}^{2k - 1}\frac{\omega^{-r(n + \ell -1)}}{c_m(\xi_m - \alpha - \omega^r \lambda)} \\
        &= \frac{1}{2k \lambda^{\ell+n-1}} \sum_{r = 0}^{2k - 1}\sum_{m = 1}^M \frac{\omega^{-r(n + \ell -1)}}{c_m(\xi_m - \alpha - \omega^r \lambda)} + \frac{\omega^{-r(n + \ell -1)}}{c_m(-\xi_m - \alpha - \omega^r \lambda)} \\
        &=  \frac{1}{2k \lambda^{\ell+n-1}} \sum_{r = 0}^{2k - 1}\omega^{-r(n + \ell -1)}\sum_{m = 1}^M \frac{2(\alpha + \omega^r \lambda)}{c_m(\xi_m^2 - (\alpha + \omega^r \lambda)^2)},
    \end{align*}
    which by formula (\ref{Tangent}) converges to 
    \begin{equation*}
        \frac{1}{2k \lambda^{\ell+n-1}} \sum_{r = 0}^{2k - 1}\omega^{-r(n + \ell -1)}C(\alpha + \omega^r \lambda) = \frac{\big(\V (\lambda)\big)_{n \ell}}{2k \lambda^{\ell+n-1}} 
    \end{equation*}
    as $M \to \infty$. Thus, we have $\det \mc W(\lambda) = 0$ if and only if
    \begin{equation*}
         \left( \prod_{i \in I}c_{i} \left((\xi_{i}- \alpha)^{2k} -\lambda^{2k}\right) \right) \det \V(\lambda)  = 0,
    \end{equation*}
    since the exponent of the factors $\lambda^{- (\ell + n - 1)}$ in each entry becomes constant when the determinant is expanded. Observe that from the expression (\ref{defW}) and the inequality (\ref{StrictIneq}) we can see that the function $\lambda \mapsto \det \mc W (\lambda)$ is meromorphic in $\C$ and continuous in $(0, \min_{\ell \notin I} |\xi_\ell - \alpha|)$, and therefore so is $\lambda \mapsto \left( \prod_{i \in I}c_{i} \left((\xi_{i}- \alpha)^{2k} -\lambda^{2k}\right) \right) \det \V(\lambda)$.  Now $\lambda$ is a real solution of the above if and only if
    \begin{equation}\label{AlmostFinal}
        \left( \prod_{i \in I} \left(\xi_{i}- \alpha -\lambda \right)\left(\xi_{i}- \alpha + \lambda \right)  \right)  \det \V(\lambda) = 0.
    \end{equation}

    Recall that inequality (\ref{StrictIneq}) and item (i) of Proposition \ref{QualP} also established that $\lambda_0$ is within the interval $\left(0, \min_{\ell \notin I}|\xi_{\ell} - \alpha|\right)$. We know the zeros of $A(\alpha + x)$ in this interval are precisely the $\xi_i - \alpha$ and those of $A(\alpha - x)$ precisely the $- \xi_i +\alpha$. So we can replace the product in (\ref{AlmostFinal}) by these respective functions obtaining $\lambda_0$ must be the first positive solution of
    \begin{equation*}
        A(\alpha + \lambda)A(\alpha - \lambda) \det \V(\lambda) = 0.
    \end{equation*}

    Now we come back to the assumption we made when $k = 1$. We supposed $\alpha$ is not the midpoint of two consecutive zeros of $A$, but we will now show that even in this case the first positive zero of (\ref{RealSol}) yields our answer. In fact, if there exists $i \in \Z^*$ such that $\alpha = \frac{1}{2}(\xi_i + \xi_{i+1}) $, then by (\ref{Int}), we have that
    \begin{align*}
        \|f\|_{\H(E)}^2 &= \sum_{A(\xi) = 0} \frac{|f(\xi)|^2}{K(\xi, \xi)} = \sum_{A(\xi) = 0} \frac{|\xi - \alpha|^2}{|\xi - \alpha|^2}\frac{|f(\xi)|^2}{K(\xi, \xi)} \\
        &\leq \frac{1}{|\xi_i - \alpha|^2}\sum_{A(\xi) = 0} |\xi - \alpha|^2\frac{|f(\xi)|^2}{K(\xi, \xi)} = \frac{1}{|\xi_i - \alpha|^2} \|(z-\alpha) f\|_{\H(E)}^2,
    \end{align*}
    so $\lambda_0 \geq |\xi_i - \alpha| $. Now since $|\xi_i - \alpha| = |\xi_{i+1} - \alpha|$, equality above can be obtained only if $f(\xi) = 0$ when $\xi$ is a zero of $A$ neither equal to $\xi_i$ nor to $\xi_{i+1}$. Hence extremizers can only be of the form
    \begin{align*}
        f(\xi_{i})\frac{K(\xi_{i},x)}{K(\xi_{i},\xi_{i})} + f(\xi_{i+1})\frac{K(\xi_{i+1},x)}{K(\xi_{i+1},\xi_{i+1})} =\bigg\{\frac{f(\xi_i)}{A'(\xi_i)} \frac{1}{z - \xi_i} + \frac{f(\xi_{i+1})}{A'(\xi_{i+1})} \frac{1}{z - \xi_{i+1}}\bigg\} A(x),
    \end{align*}
    which can possibly occur in $\X_k$ if we impose the symmetry condition $ f(\xi_i)/A'(\xi_i) = - f(\xi_{i+1})/A'(\xi_{i+1})$. For this reason, we actually have the equality $\lambda_0 = |\xi_i - \alpha| = \frac{1}{2}( \xi_{i+1} - \xi_i )$. 

    We only have to show that this is the first zero of
    \begin{align*}
         A(\alpha + \lambda) A(\alpha - \lambda) \left( C(\alpha + \lambda) - C(\alpha - \lambda)\right) =  B(\alpha + \lambda) A(\alpha - \lambda) -  A(\alpha + \lambda) B(\alpha - \lambda).
    \end{align*}
    Plugging in $\frac{1}{2}( \xi_{i+1} - \xi_i )$ for $\lambda$ yields
    \begin{equation*}
        B(\xi_{i+1}) A(\xi_{i}) -  A(\xi_{i+1}) B(\xi_{i}) = 0,
    \end{equation*}
    but we need to rule out the existence of smaller zeros. To do this, recall that to every $E$ Hermite-Biehler  function satisfying condition (C1) we may associate an analytic phase function $\varphi:\mc W \to \C $, where $\R \subseteq \mc W \subseteq \C $ is an open simply connected set, satisfying
    \begin{equation}\label{Phase}
        E(x)e^{ i \varphi (x)} \in \R \ \ \text{ for all} \ \ x \in \R.
    \end{equation}
    It is immediate from (C1) and the fact that both $A$ and $B$ are real entire that $\gamma$ is a zero of $A$ if and only if $\varphi(\gamma) \equiv \frac{\pi}{2} (\bmod \pi)$. 
    
    Now, an expression equivalent to (\ref{Phase}) is
    \begin{equation*}
        e^{2 i \varphi (x)}  = \frac{E^*(x)}{E(x)} = \frac{A(x) + iB(x)}{A(x) - iB(x)} 
    \end{equation*}
    and from this it follows that $B(\beta)A(\gamma) - A(\beta)B(\gamma) = 0$ if and only if $\varphi(\beta) \equiv \varphi(\gamma) \bmod \pi$. 
    
    Suppose by contradiction there is a $0 < \lambda^* < \frac{1}{2}(\xi_{i+1} - \xi_{i})$ such that $B(\alpha + \lambda^*) A(\alpha - \lambda^*) -  A(\alpha + \lambda^*) B(\alpha - \lambda^*) = 0$. Thus, we would need $\varphi(\alpha + \lambda^*) \equiv \varphi(\alpha - \lambda^*) \bmod \pi$ . Since $\varphi(x)$ is a strictly increasing function of $x \in \R$ \cite[Problem 48]{dB} it follows that there exists a $\gamma \in [\alpha - \lambda^*, \alpha + \lambda^* ]$ such that $\varphi(\gamma) \equiv \frac{\pi}{2} \bmod \pi$, i.e., $A(\gamma) = 0$. But $\xi_{i}< \alpha - \lambda^*< \alpha + \lambda^* < \xi_{i+1}$, while $\xi_{i} \text{ and } \xi_{i+1}$ are consecutive zeros of $A$, which is a contradiction. Hence $\lambda_0 =  \frac{1}{2}( \xi_{i+1} - \xi_i )$ is indeed the first positive zero, concluding the proof. 
\end{proof}

\noindent {\sc Remark.} In the particular case $k = 1$ and $\alpha$ is a midpoint of two consecutive zeros $\xi_i < \xi_{i + 1}$, we saw in the proof above that extremizers must be complex multiples of the function
\begin{equation*}
    f(z) = \frac{A(z)}{(z - \xi_i)(z - \xi_{i+1})},
\end{equation*}
but we may also classify extremizers in the other cases contemplated in this theorem.  In view of the proof above, we can obtain a real extremizing sequence $\{a_n^*\} \in X$ to the right-hand side of (\ref{EPSeq}) by taking $(a_{i_1}^*, \ldots, a_{i_k}^*) \in \ker \mc W (\lambda_0) \setminus \{\pmb 0\}$ and building the rest of the sequence by (\ref{SeqTerms}). Applying formula (\ref{Int}), extremizers must then be in the span over $\C$ of the entire functions
\begin{equation*}
    f(z) = \sum_{\substack{{n = -\infty}\\ {n \neq 0}}}^\infty a_n^* \frac{A(z)}{z - \xi_n}
\end{equation*}
that come from sequences $\{a_n^*\} \in X$ constructed in this way, as the initial reduction in the proof of Lemma \ref{SeqEquiv} guarantees.

\subsection{Proof of Theorem \ref{EPSolution}} A few steps are missing in order to conclude Theorem \ref{EPSolution}, which instead follows directly from a slight adaptation of the argument. In fact, as Carneiro, Chirre, and Milinovich establish in \cite[Section 5.2]{CChiM}, the spaces $(\H_{G, \pi \Delta}, \langle \cdot ,\cdot\rangle_{\H_{G, \pi \Delta}})$ are isometrically isomorphic to certain de Branges spaces. They give a procedure that allows one to obtain a (not necessarily unique) Hermite-Biehler function $E$  from the expression of the reproducing kernels of $\H_{G, \pi \Delta}$. It consists of defining
\begin{equation*}
    L(w, z) := 2 \pi i (\overline{w} - z) K_{G, \pi \Delta}(w,z), 
\end{equation*}
and then considering the entire function
\begin{equation*}
    E(z) := L(i,i)^{-\frac{1}{2}} L(i, z),
\end{equation*}
which is Hermite-Biehler (see \cite[Problems 50-53]{dB} and \cite[Appendix A]{CChaLM} for details). By \cite[Theorem 23]{dB}, $E$ is such that $\H_{G,\pi \Delta} = \H(E)$ as sets and
\begin{align*}
    \| F \|_{\H_{G, \pi \Delta}} = \left( \int_\R |F(x)|^2 \,  W_G(x) \, \d x \right)^{\frac{1}{2}} =   \left(\int_{-\infty}^\infty \left| \frac{F(x )}{E(x)}\right|^2 \d x  \right)^{\frac{1}{2}}= \|F\|_{\H (E)} 
\end{align*}
for each $F$ therein. 
Such an $E$ can be seen to satisfy conditions (C1) and (C2), so that Theorem \ref{EPSolution} is thus a consequence of the following.

\begin{corollary}\label{CorTieBack}
    Let $E$ be a Hermite-Biehler function satisfying (C1) and (C2) and $\X_1 \neq \{ 0\}$. Setting $\lambda_0 = \big((\mathbb{EP})(E,1,\alpha)\big)^{1/2}$, we have that $\lambda_0$ is the first positive zero of the equation
    \begin{equation*}
        K(\alpha + \lambda, \alpha - \lambda) = 0.
    \end{equation*}
\end{corollary}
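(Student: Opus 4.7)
The plan is to apply Theorem \ref{EPThm} in the specialization $k=1$ and then match the resulting transcendental equation with the vanishing of $K(\alpha+\lambda,\alpha-\lambda)$ by a direct calculation using formula (\ref{RepKernel}). Setting $k=1$, the primitive root of unity becomes $\omega = e^{i\pi}=-1$, so the matrix $\V(\lambda)$ collapses to the scalar
\[
\V(\lambda) \;=\; \sum_{r=0}^{1} \omega^{-r}\, C(\alpha + \omega^{r}\lambda) \;=\; C(\alpha+\lambda) - C(\alpha-\lambda),
\]
and equation (\ref{RealSol}) reads
\[
A(\alpha+\lambda)\,A(\alpha-\lambda)\bigl(C(\alpha+\lambda) - C(\alpha-\lambda)\bigr) = 0.
\]

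Next, I would clear the meromorphic denominators by writing $C=B/A$, which reduces the displayed equation to the entire bilinear expression
\[
B(\alpha+\lambda)\,A(\alpha-\lambda) - A(\alpha+\lambda)\,B(\alpha-\lambda) = 0.
\]
Then I would invoke the reproducing-kernel formula (\ref{RepKernel}) with the real arguments $w=\alpha+\lambda$ and $z=\alpha-\lambda$ (so $\overline{w}=\alpha+\lambda$ and $z-\overline{w}=-2\lambda$):
\[
K(\alpha+\lambda,\alpha-\lambda) \;=\; \frac{B(\alpha-\lambda)A(\alpha+\lambda) - A(\alpha-\lambda)B(\alpha+\lambda)}{-2\pi\lambda} \;=\; \frac{B(\alpha+\lambda)A(\alpha-\lambda) - A(\alpha+\lambda)B(\alpha-\lambda)}{2\pi\lambda}.
\]
For $\lambda>0$ this produces the desired equivalence: $K(\alpha+\lambda,\alpha-\lambda)=0$ if and only if the bilinear expression above vanishes, i.e. if and only if equation (\ref{RealSol}) is satisfied.

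The main point to check carefully is that this chain of equivalences preserves the \emph{first} positive zero, not merely the zero set. In the interval $(0,\min_{\ell\notin I}|\xi_\ell-\alpha|)$ from the proof of Theorem \ref{EPThm}, the factor $2\pi\lambda$ never vanishes, so passing from the form $A(\alpha+\lambda)A(\alpha-\lambda)\det\V(\lambda)$ to the kernel form introduces no spurious zeros. The only genuinely subtle configuration is the midpoint case $\alpha=\tfrac12(\xi_i+\xi_{i+1})$ already isolated at the end of the proof of Theorem \ref{EPThm}, where $\lambda_0=|\xi_i-\alpha|$: here one still obtains $K(\alpha+\lambda_0,\alpha-\lambda_0)=K(\xi_{i+1},\xi_{i})=B(\xi_{i+1})A(\xi_i)-A(\xi_{i+1})B(\xi_i)=0$, and the argument via the analytic phase function $\varphi$ in the final paragraph of that proof rules out any smaller positive zero of $\lambda\mapsto K(\alpha+\lambda,\alpha-\lambda)$. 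Combining these observations identifies $\lambda_0$ with the first positive zero of $\lambda\mapsto K(\alpha+\lambda,\alpha-\lambda)$, yielding the corollary.
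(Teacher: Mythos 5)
Your computation for the case $A\notin\H(E)$ is exactly the paper's: collapsing $\V(\lambda)$ to the scalar $C(\alpha+\lambda)-C(\alpha-\lambda)$, clearing denominators to $B(\alpha+\lambda)A(\alpha-\lambda)-A(\alpha+\lambda)B(\alpha-\lambda)$, and recognizing this as $2\pi\lambda\,K(\alpha+\lambda,\alpha-\lambda)$ via (\ref{RepKernel}) are all correct, and your observation that the factor $2\pi\lambda$ never vanishes on the relevant interval is what's needed to preserve the \emph{first} positive zero.

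However, there is a genuine gap. Corollary \ref{CorTieBack} only assumes (C1) and (C2), whereas Theorem \ref{EPThm} also requires (C3), i.e.\ $A\notin\H(E)$. Your proposal applies Theorem \ref{EPThm} unconditionally, so it silently assumes (C3) and leaves the case $A\in\H(E)$ unaddressed. This case is not vacuous: for a general Hermite--Biehler $E$ one can have $A\in\H(E)$ (while $B\notin\H(E)$, since $E=A-iB$ itself cannot lie in $\H(E)$). The paper's proof handles it by rerunning the entire machinery of Lemma \ref{SeqEquiv} and Theorem \ref{EPThm} with the zeros of $B$ in place of those of $A$, using de Branges's interpolation/Parseval formulas for the $B$--lattice and the partial-fraction expansion of $A/B$; after translating back through (\ref{RepKernel}) this again yields $\mp 2\pi\lambda\,K(\alpha+\lambda,\alpha-\lambda)$. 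Your argument needs to be completed by this second branch (or by a reason why (C3) can be assumed without loss of generality -- the paper does not claim that).

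A minor imprecision: at the midpoint case you write $K(\xi_{i+1},\xi_i)=B(\xi_{i+1})A(\xi_i)-A(\xi_{i+1})B(\xi_i)$, omitting the $1/(2\pi\lambda_0)$ factor from (\ref{RepKernel}). The value is $0$ either way, so this only affects the bookkeeping, not the conclusion.
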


\begin{proof}
    If $A \notin \H (E)$ then we can apply Theorem \ref{EPThm} when $k = 1$, which states $\lambda_0$ is the first zero of 
    \begin{align*}
         A(\alpha + \lambda) A(\alpha - \lambda) \left( C(\alpha + \lambda) - C(\alpha - \lambda)\right) =  B(\alpha + \lambda) A(\alpha - \lambda) -  A(\alpha + \lambda) B(\alpha - \lambda),
    \end{align*}
  which by (\ref{RepKernel}) is 
  \begin{equation*}
      2 \pi \lambda \, K(\alpha + \lambda, \alpha - \lambda).
  \end{equation*}
  If $B \notin \H (E)$, we may use the interpolation formula and Parseval's identity corresponding to the zeros of $B$. That is, as in (\ref{Int}) and (\ref{Pars}), we have by \cite[Theorem 22]{dB} that 
  \begin{equation*}
      f(z) = \sum_{B(\eta) = 0} \frac{f(\eta)}{K(\eta, \eta)}K(\eta, z) \ \ \text{   and   } \ \ \|f\|_{\H(E)}^2 = \sum_{B(\eta) = 0} \frac{|f(\eta)|^2}{K(\eta, \eta)},
  \end{equation*}
   and an analogous version of Lemma \ref{SeqEquiv} holds (see also the remark immediately following that lemma). By (C2), the zeros of $B$ are also symmetric in the sense that $B(\eta) = 0 $ if and only if $B(-\eta) = 0$. Its non-negative zeros can be enumerated as $0 = \eta_0 < \eta_1 < \ldots$, and so defining $d_m = B'(\eta_m)/A(\eta_m)$, we likewise have the formula
   \begin{equation*}
       \frac{A(z)}{B(z)} = \frac{1}{d_0 z} + \sum_{m=1}^\infty \frac{2 z}{d_m (z^2 - \eta_m^2)}
   \end{equation*}
   converging uniformly in compact sets away from the zeros of $B$. The same procedure in the proof of Theorem \ref{EPThm} can now be repeated, in which case $\lambda_0$ may be seen to be the first positive zero of 
  \begin{equation*}
      B(\alpha + \lambda)B(\alpha - \lambda) \left( \frac{A(\alpha + \lambda)}{B(\alpha + \lambda)} -  \frac{A(\alpha - \lambda)}{B(\alpha - \lambda)}\right) = - 2 \pi \lambda \, K(\alpha + \lambda, \alpha - \lambda).
  \end{equation*}
    But for any given $E$, at most one of $A$ and $B$ can be in the space $\H (E)$, since by integrability considerations $E(z) = A(z) - i B(z)$ cannot itself occur in $\H (E)$, and the result holds.
\end{proof}

\noindent {\sc Remark.} The condition $\X_1 \neq \{ 0 \}$ is, of course, satisfied in the cases that interest us. For instance 
\begin{equation*}
    f(z) = \frac{\cos^2 \pi \Delta z /2}{1 - 4 \Delta^2 z^2}
\end{equation*}
is such that $z f(z) \in \H_{\pi \Delta}$. In general, we may conclude $\X_k$ is non-trivial if one of the associated functions $A(z)$ or $B(z)$ has at least $k+1$ zeros. If, say, $\xi_1, \ldots, \xi_{k+1}$ are such zeros, then one may obtain $g(z) = \frac{A(z)}{(z - \xi_1) \ldots (z - \xi_{k+1})} \in \H(E)$ (or the analogous for $B$), from an appropriate linear combination of the reproducing kernels $K(\xi_i, z)$. Since $|g(x)| \sim |A(x)||x|^{-k-1}$ for $|x| \gg 1$, this function satisfies $z^k g(z) \in \H(E)$.

\smallskip

\section{Concluding Remarks}

We conclude with an interesting qualitative property of the extremal problem in de Branges spaces.
\begin{proposition}
    Let $E$ be Hermite-Biehler and $k \in \N$. If $\X_k \neq \{0\}$, then the function $\alpha \in \R \mapsto (\mathbb{EP})(E, k, \alpha)$ is continuous.
\end{proposition}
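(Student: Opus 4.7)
The plan is to show that $\alpha \mapsto \sqrt{(\mathbb{EP})(E,k,\alpha)}$ is locally Lipschitz continuous on $\R$, which in particular implies continuity of $(\mathbb{EP})(E,k,\alpha)$. The core idea is to compare the Rayleigh quotients at two nearby values $\alpha,\alpha' \in \R$ using the \emph{same} test function $f \in \X_k$. The binomial expansion
$$
(z-\alpha)^k = \sum_{j=0}^{k} \binom{k}{j}(z-\alpha')^{j}\,(\alpha'-\alpha)^{k-j}
$$
reduces everything to controlling the intermediate norms $\|(z-\alpha')^j f\|_{\H(E)}$ for $0 \leq j < k$, since the ``top'' term is precisely $(z-\alpha')^k f$ and the ``bottom'' term is $(\alpha'-\alpha)^k f$.

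The key observation is that for any $f \in \X_k$ and any $0 \leq j \leq k$, splitting the region of integration into $\{|x-\alpha'|\leq 1\}$ and $\{|x-\alpha'| \geq 1\}$ one has the pointwise bound $|x-\alpha'|^{2j} \leq 1 + |x-\alpha'|^{2k}$, whence
$$
\|(z-\alpha')^j f\|_{\H(E)}^{2} \;\leq\; \|f\|_{\H(E)}^{2} + \|(z-\alpha')^{k} f\|_{\H(E)}^{2}.
$$
In particular each intermediate norm is finite (using, as noted in the remark after Proposition \ref{QualP}, that $f \in \X_k$ is equivalent to $(z-\alpha')^k f \in \H(E)$). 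Inserting this into the binomial expansion and applying the triangle inequality in $\H(E)$, for $|\alpha-\alpha'|\leq 1$ one arrives at
$$
\|(z-\alpha)^{k} f\|_{\H(E)} \;\leq\; \bigl(1 + C_k|\alpha-\alpha'|\bigr)\,\|(z-\alpha')^{k}f\|_{\H(E)} + C_k|\alpha-\alpha'|\,\|f\|_{\H(E)},
$$
for an explicit constant $C_k$ depending only on $k$ (one may take $C_k = 2^k$).

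Dividing by $\|f\|_{\H(E)}$ and taking the infimum over nonzero $f \in \X_k$, we obtain
$$
\sqrt{(\mathbb{EP})(E,k,\alpha)} \;\leq\; \bigl(1+C_k|\alpha-\alpha'|\bigr)\sqrt{(\mathbb{EP})(E,k,\alpha')} + C_k|\alpha-\alpha'|,
$$
with the symmetric inequality holding upon swapping the roles of $\alpha$ and $\alpha'$. Sending $\alpha \to \alpha'$ yields the desired continuity, and in fact local Lipschitz continuity on compact subsets of $\R$.

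I do not anticipate a serious obstacle: the argument avoids any weak-compactness or extremizer-extraction step (which would be delicate, since mass of near-extremizers could a priori escape to infinity), and instead exploits directly the algebraic structure of the shift $(z-\alpha)^k$. The only technical point requiring care is the uniform control of the intermediate norms, which is handled by the elementary inequality above. Note that the argument uses only that $E$ is Hermite--Biehler and that $\X_k \neq \{0\}$; none of the hypotheses (C1)--(C3) is required, consistent with the generality of the statement.
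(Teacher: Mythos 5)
Your proof is correct, and it takes a genuinely different route from the paper. The paper invokes the existence of extremizers $f_\alpha$ (via Proposition \ref{QualP}, which uses weak compactness in $\H(E)$), proves upper semicontinuity by dominated convergence applied to the fixed extremizer $f_\alpha$, and proves lower semicontinuity by testing the Rayleigh quotient at $\alpha$ against the extremizers $f_{\alpha_n}$ for $\alpha_n$, estimating the integral of $\big((x-\alpha)^{2k} - (x-\alpha_n)^{2k}\big)|f_{\alpha_n}/E|^2$ as $O(|\alpha-\alpha_n|)$. You instead compare the two Rayleigh quotients at the same arbitrary test function $f$, via the binomial expansion of $(z-\alpha)^k$ around $\alpha'$ and the elementary pointwise bound $|x-\alpha'|^{2j} \leq 1 + |x-\alpha'|^{2k}$, then take infima; this bypasses extremizers and Banach--Alaoglu entirely and yields the stronger conclusion of local Lipschitz continuity of $\alpha \mapsto \sqrt{(\mathbb{EP})(E,k,\alpha)}$ (after one notes, as your inequality itself shows, that this function is bounded on compact intervals). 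Both proofs ultimately rest on controlling intermediate powers of $|x-\alpha'|$ by the $0$-th and $k$-th ones, but your version applies this observation directly to the test function rather than to near-extremizers, which is simpler and, as you observe, avoids any concern about mass escaping to infinity for near-extremal sequences.
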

\begin{proof}    
    By Proposition \ref{QualP}, for all $\alpha$ there exists an $f_\alpha \in \H (E)$ that attains the corresponding infimum in (\ref{EP}). That is, if we denote by 
    \begin{equation*}
        \Phi_{\alpha} (f):= \frac{\|(z-\alpha)^k f\|_{\H (E)}^2}{\|f\|_{\H (E)}^2}
    \end{equation*}
    for $0 \neq f \in \X_k$, we have $\Phi_{\alpha}(f_\alpha) = (\mathbb{EP})(E, k, \alpha)$. 
    
    Now fix $\alpha$ and let $\{\alpha_n\} \subseteq \R$ be a sequence converging to $\alpha$. We can invoke Proposition $\ref{QualP}$ again to obtain a sequence of extremizers $\{f_{\alpha_n}\} \subseteq \H(E)$ each satisfying $\Phi_{\alpha_n}(f_{\alpha_n}) = (\mathbb{EP})(E, k, \alpha_n)$. Without loss of generality, assume $\|f_\alpha\|_{\H (E)} = 1$ and $\|f_{\alpha_n}\|_{\H (E)} = 1$.

    On the one hand we have
    \begin{align}
        \nonumber (\mathbb{EP})(E, k, \alpha) &= \Phi_{\alpha}(f_\alpha) =  \int_{-\infty}^\infty \left| \frac{(x - \alpha)^k f_{\alpha}(x)}{E(x)}\right|^2 \d x \\
         \nonumber &= \lim_{n \rightarrow \infty} \int_{-\infty}^\infty \left| \frac{(x - \alpha_n)^k f_{\alpha}(x)}{E(x)}\right|^2 \d x = \lim_{n \rightarrow \infty} \Phi_{\alpha_n}(f_{\alpha}) \\
        \label{usc} &\geq \limsup_{n \rightarrow \infty} \,(\mathbb{EP})(E, k, \alpha_n) 
    \end{align}
    by dominated convergence, so that $\alpha \mapsto (\mathbb{EP})(E, k, \alpha)$ is upper semicontinuous.

    On the other hand, we can also see that
    \begin{align*}
        (\mathbb{EP})(E, k, \alpha) &\leq \Phi_{\alpha}(f_{\alpha_n}) =  \Phi_{\alpha_n}(f_{\alpha_n}) + \int_{-\infty}^\infty \left\{(x - \alpha)^{2k} - (x - \alpha_n)^{2k} \right\} \left| \frac{ f_{\alpha_n}(x)}{E(x)}\right|^2 \d x   \\
        &= (\mathbb{EP})(E, k, \alpha_n) + O\big(|\alpha - \alpha_n|\big),
    \end{align*}
    because
    \begin{align*}
        \bigg| \int_{-\infty}^\infty \big\{(x - \alpha)^{2k} - &(x - \alpha_n)^{2k} \big\}  \left| \frac{ f_{\alpha_n}(x)}{E(x)}\right|^2 \d x \bigg| \\
        &\leq \int_{-\infty}^\infty \left|(x - \alpha)^{2k} - (x - \alpha_n)^{2k} \right| \left| \frac{ f_{\alpha_n}(x)}{E(x)}\right|^2 \d x \\
        &\ll |\alpha -\alpha_n| \int_{-\infty}^\infty (\max \{|\alpha|,|\alpha_n|\}^{2k -1} + |x|^{2k - 1}) \left| \frac{ f_{\alpha_n}(x)}{E(x)}\right|^2 \d x \\
        &\ll |\alpha -\alpha_n|(\|f_{\alpha_n}\|_{\H (E)}^2 + \|z^k f_{\alpha_n}\|_{\H (E)}^2)\\
        &=  |\alpha -\alpha_n|( 1 +  (\mathbb{EP})(E, k, \alpha_n))
    \end{align*}
    and $(\mathbb{EP})(E, k, \alpha_n)$ is bounded in $n$ by (\ref{usc}), so that 
    \begin{equation*}
        (\mathbb{EP})(E, k, \alpha) \leq \liminf_{n \rightarrow \infty} \, (\mathbb{EP})(E, k, \alpha_n),
    \end{equation*}
    i.e., it is  lower semicontinuous. Hence, it is also continuous.
\end{proof}

\section{Acknowledgements}
I am grateful to Emanuel Carneiro for the constant guidance and support throughout the writing of this paper. I would also like to thank Tolibjon Ismoilov for the helpful discussions and Ariel S. Boiardi for his aid in the elaboration of the graphs of Figure 1.

\end{document}